\newtheorem{theorem}{Theorem}[section]
\theoremstyle{definition}
\newtheorem{definition}[theorem]{Definition}
\newtheorem{example}[theorem]{Example}
\newtheorem{corollary}[theorem]{Corollary}
\newtheorem{lemma}[theorem]{Lemma}
\newtheorem{proposition}[theorem]{Proposition}
\newtheorem{conjecture}[theorem]{Conjecture}
\theoremstyle{remark}
\newtheorem{remark}[theorem]{Remark}
\numberwithin{equation}{section}
\newcommand{\abs}[1]{\lvert#1\rvert}
\newcommand{\blankbox}[2]{%
  \parbox{\columnwidth}{\centering
    \setlength{\fboxsep}{0pt}%
    \fbox{\raisebox{0pt}[#2]{\hspace{#1}}}%
  }%
}
\newcommand\ovl{\overline}
\newcommand \unb[2]{\underset{#1}{{\underbrace{#2}}}}
\newcommand \fk[1]{{{\mathfrak #1}}}
\newcommand \C[1]{{\mathcal #1}}
\newcommand \bb[1]{{\mathbb #1}}
\newcommand \wti[1]{{\widetilde {#1}}}
\newcommand \bC{{\bb C}}
\newcommand \bN{{\bb N}}
\newcommand \bR{{\bb R}}
\newcommand\one{1\!\!1}
\newcommand\CO{{\C O}}
\newcommand \vO{{\check \CO}}
\newcommand\LG{^L\!G}
\newcommand\ve{^\vee\!\!e}
\newcommand\vh{^\vee\!\!h}
\newcommand\vf{^\vee\!\!f}
\newcommand\vK{^\vee\!\!K}
\newcommand\vk{^\vee\!\!\mathfrak k}
\newcommand\vs{^\vee\!\!\mathfrak s}
\newcommand\vg{^\vee\!\!\mathfrak g}
\newcommand\vG{^\vee\!G}
\newcommand\eq{\begin{equation}}
\newcommand\eeq{\end{equation}}
\newcommand\eqn{\begin{equation*}}
\newcommand\eeqn{\end{equation*}}
\newcommand\ie{{\it i.e.~ }}
\newcommand\ep{{\epsilon}}
\newcommand\la{{\lambda}}
\newcommand\al{{\alpha}}
\newcommand\sig{{\sigma}}
\newcommand \Kt{\wti{K}}
\newcommand\vth{^\vee\!\theta}
\newcommand\Hom{\operatorname{Hom}}
\newcommand\Ad{\operatorname{Ad}}
\newcommand\ad{\operatorname{ad}}
\newcommand\Ind{\operatorname{Ind}}
\newcommand\Cas{\operatorname{Cas}}
\DeclareMathOperator{\im}{im}
\newcommand\bigwedgestar{{\textstyle{\bigwedge}^*}}
\begin{document}

\title[Dirac cohomology for complex groups]{Dirac cohomology and
  unipotent representations of complex groups} 

\author{Dan Barbasch}
\address{Department of Mathematics, Cornell University, Ithaca NY
  14850, USA}
\thanks{The first author was supported in part by NSF Grants \#0967386
  and \#0901104.}

\author{Pavle Pand\v zi\'c}
\address{Dept. of Mathematics
                University of Zagreb, Bijeni\v cka 30, 10000 Zagreb, Croatia}   
\email{pandzic@math.hr}
\thanks{The second author was supported in part by a grant from the Ministry of Science, Education and Sports of the Republic of Croatia.}

\subjclass{Primary 22E47 ; Secondary 22E46}
\date{June 1, 2010.}

\dedicatory{This paper is dedicated to Henri Moscovici.}

\keywords{Dirac cohomology, unipotent representations}

\begin{abstract}
This paper studies unitary representations with Dirac cohomology for
complex groups, in particular relations to unipotent representations
\end{abstract}

\maketitle

\section{Introduction}\label{sec:1}

In this paper we will study the problem of classifying unitary
representations with Dirac cohomology. We will focus on the case when the
group $G$ is a complex group viewed as a real group. It will easily follow that
a necessary condition for having nonzero Dirac cohomology is that twice the infinitesimal
character is regular and integral. The main conjecture is the following.

\bigskip
\begin{conjecture}\label{conj:main} Let $G$ be a complex Lie group viewed
as a real group, and $\pi$ be an irreducible unitary representation such that
twice the infinitesimal character of $\pi$ is regular and integral. Then
$\pi$ has nonzero Dirac cohomology if and only if $\pi$ is cohomologically induced
from an \textit{essentially unipotent} representation with nonzero Dirac cohomology.
Here by an essentially unipotent representation we mean a unipotent
  representation tensored with a unitary character

\end{conjecture}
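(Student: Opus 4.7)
The proof splits into two directions. For the ``if'' direction, the plan is to push Dirac cohomology through cohomological induction. For the ``only if'' direction, I would combine the Huang--Pand\v zi\'c theorem (Vogan's conjecture) with the Barbasch--Vogan classification of the unitary dual of a complex group.

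\textbf{The ``if'' direction.} Suppose $\pi_L = \pi_L^u \otimes \chi$ is essentially unipotent on a Levi $L \subset G$ with $H_D(\pi_L) \neq 0$, and $\pi = \C L_{\fk q}(\pi_L)$ for some $\theta$-stable parabolic $\fk q = \fk l + \fk u$. The plan is to invoke the compatibility between Dirac cohomology and cohomological induction in the good range: up to a $\rho$-shift and a tensor factor $\bigwedgestar(\fk u \cap \fk p)$, $H_D(\pi)$ contains a copy of $H_D(\pi_L)$, hence is nonzero. The assumption that twice the infinitesimal character is regular integral should guarantee the relevant range condition.

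\textbf{The ``only if'' direction.} Let $\pi$ be irreducible unitary with infinitesimal character $\lambda$ such that $H_D(\pi) \neq 0$ and $2\lambda$ is regular integral. By the Huang--Pand\v zi\'c theorem, $\lambda$ is conjugate to $\mu + \rho_c$ for the highest weight $\mu$ of some $\wti K$-type in $H_D(\pi)$; together with the regular-integrality of $2\lambda$ this constrains the parameter sharply. Next, apply the Barbasch--Vogan classification: $\pi$ is realized as $\C L_{\fk q}(\pi_L)$ for some $\theta$-stable $\fk q$ and some irreducible unitary $\pi_L$ on $L$. Running the Dirac-cohomology transfer of the ``if'' direction in reverse (a bottom-layer $\wti K$-type in $H_D(\pi)$ descends to a $\wti K_L$-type in $H_D(\pi_L)$) forces $H_D(\pi_L) \neq 0$. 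The final step is to identify $\pi_L$ as essentially unipotent, using the infinitesimal-character constraint inherited from $\pi$ through $\C L_{\fk q}$.

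\textbf{Main obstacle.} The hardest step is this final identification. For complex groups the Barbasch--Vogan list of unipotent representations is explicit, so the plan is to show that any unitary $\pi_L$ on $L$ whose infinitesimal character matches the inherited parameter and which has $H_D(\pi_L) \neq 0$ must in fact be a unitary-character twist of a unipotent representation. A secondary obstacle is to verify the good-range condition on $\fk q$ in both directions; this should follow from the regular-integrality of $2\lambda$, but will need explicit checking near the edge of the range, and in particular one must be careful about Levi subgroups for which the ``unipotent'' inducing datum sits at a singular place in the inherited parameter.
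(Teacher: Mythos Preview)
The statement you are attempting to prove is the paper's main \emph{conjecture}, not a theorem; the paper does not claim to prove it and explicitly says it is left for future work. What the paper does establish is essentially your ``if'' direction: Theorem~\ref{thm:uind} shows that a representation unitarily induced from a unitary representation with nonzero Dirac cohomology on a Levi (twisted by a unitary character) again has nonzero Dirac cohomology, provided $2\lambda$ is regular integral. Since for complex groups cohomological induction and real parabolic induction coincide up to a parameter shift, your ``if'' strategy aligns with what the paper actually carries out.

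Your ``only if'' direction has a genuine gap precisely at the step you flag as hardest, and it is more serious than you indicate. There is no general ``Barbasch--Vogan classification'' that realizes every irreducible unitary representation of a complex group as cohomologically induced from some $\pi_L$ on a proper Levi: Barbasch's results in \cite{B} cover only classical types, and even there the output is a list of building blocks assembled by induction and complementary series, not a uniform statement of the shape $\pi=\C L_{\fk q}(\pi_L)$. For exceptional groups the unitary dual is not completely known. More importantly, the final step ``identify $\pi_L$ as essentially unipotent'' is not a residual technicality but is the whole content of the conjecture: once the inductive layer is stripped off, you are asking whether a unitary representation on the Levi with $2\lambda$ regular integral and $H_D\neq 0$ must be essentially unipotent, which is the same conjecture on a smaller group. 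The paper's approach to the ``only if'' direction is accordingly not structural but case-by-case: it verifies the conjecture at infinitesimal character $\rho/2$ for types $A$ and $B$ (Section~\ref{sec:rhoovertwo}), for certain parameters in $GL(n)$ (Section~4), and by an exhaustive table for $F_4$, in each case relying on explicit knowledge of the unitary dual and of $K$-type multiplicities.
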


We start with some background and motivation.

Let  $G$ be the real points of a linear connected reductive  group.
Its Lie algebra will be denoted by $\frak g_0$. Fix a Cartan
involution $\theta$ and write $\frak g_0=\frak k_0 +\frak s_0$ for the
Cartan decomposition. Denote by $K$ the maximal compact subgroup of $G$ with
Lie algebra $\fk k_0.$ The complexification $\frak g:=(\frak g_0)_\bC,$
 decomposes as $\fk g=\fk k+\fk s.$

A representation $(\pi,\mathcal{H})$ on a Hilbert space is
called {\textit{unitary}}, if $\mathcal{H}$ admits a $G-$invariant
positive definite inner product. One of the major problems of
representation theory is to classify the irreducible unitarizable
modules of $G.$ As motivation for why this problem is important, we
present an example from automorphic forms. Let $\Gamma\subset G$ be a
discrete cocompact subgroup. A question of interest is the computation
of $H^*(\Gamma).$ Let $X:=\Gamma\backslash G/ K.$ Then
$H^i(\Gamma):=H^i_{top}(X,\bC),$ where $H^i_{top}(X,\bC)$ denotes the usual cohomology of
the topological space $X$: The theory of automorphic forms
provides insight into $H^i_{top}(X,\bC).$ A fundamental result of Gelfand
and Piatetsky-Shapiro is that
$$
L^2(\Gamma\backslash G)=\bigoplus
m_\pi \C H_\pi
$$
where $\pi$ are irreducible unitary representations of $G,$ and
$m_\pi<\infty$.  It implies that
\[
H^i(\Gamma)=H^i_{top}(X,\bC)=\bigoplus m_\pi H^i_{ct}(G,\C H_\pi)=\bigoplus m_\pi H^i(\fk g,
K;\C H_\pi).
\]
Here $H^i_{ct}(G,\C H_\pi)$ denotes the continuous cohomology groups (see \cite{BW}),
and the groups $H^i(\fk g,K;\C H_\pi)$ are the relative Lie algebra cohomology
groups defined in \cite{BW}, Chapter II, Section 6, or \cite{VZ}. Here the unitary representation
$\C H_\pi$ is replaced by the corresponding $(\fk g,K)$ module, denoted again by
$\C H_\pi$.

Thus to obtain information about $H^i(\Gamma)$ one needs to have
information about $m_\pi$ and $H^i(\fk g, K,\pi).$ 
It is very difficult to obtain information about the multiplicities
$m_\pi$.  On the other hand, knowledge about the vanishing of $H^i(\fk
g,K)$ for all unitary representations translates into vanishing of
$H^i(\Gamma).$ This approach leads one to consider the following problem.

\subsection*{Problem}
 Classify all irreducible admissible unitary modules
 with nonzero $(\fk g,K)$ cohomology.

\bigskip

A more general problem where $\bC$ is replaced by an arbitrary finite
dimensional representation was solved by Enright
\cite{E} for complex groups. Introducing more general coefficients has
the effect that $\C H_\pi$ is replaced by $\C H_\pi\otimes F^*$ for
some finite-dimensional representation $F$. The results were
generalized later by Vogan-Zuckermann \cite{VZ} to real groups as follows.
The $\la$ appearing below is such that the infinitesimal character of
$\C R_\fk q^s(\bC_\la)$ equals the infinitesimal character of $F$.
The answer is that $\pi=\C R_\fk q^s(\bC_\la)$, where
\begin{itemize}
\item[-] $\fk q=\fk l +\fk u\subset \fk g$ is a $\theta$ stable
  parabolic subalgebra,
\item[-] $\bC_\la$ is a unitary character of $\fk l,$
\item[-] $\C R_\fk q^s$ is \textit{cohomological induction}, and
  $s=\dim(\fk u\cap \fk k).$
\end{itemize}
The starting point for the proof is the fact $H^i(\fk g, K;\pi)\cong \Hom_K[\bigwedge^i\fk s,\pi].$
The reference \cite{BW} gives consequences of these results. For a survey of related more recent results, the reader may consult \cite{LS}.

\bigskip
A major role in providing an answer to the above problem is played by
the \textit{Dirac Inequality} of Parthasarathy \cite{P2}.
The adjoint representation of $K$ on $\fk s$ lifts to
$\Ad:\wti{K}\longrightarrow Spin(\fk s)$, where $\wti{K}$ is the spin double cover of $K$. The Dirac operator
$$
D:\C H_\pi\otimes Spin\longrightarrow \C
H_\pi\otimes Spin
$$
is defined as
\[
D=\sum_i b_i\otimes d_i \quad\in U(\fk g)\otimes C(\fk s),
\]
where $C(\fk s)$ denotes the Clifford algebra of $\fk s$ with respect to the Killing form,
$b_i$ is a basis of $\fk s$ and $d_i$ is the dual basis with respect to the Killing form,
and $Spin$ is a spin module for $C(\fk s)$.
$D$ is independent of the choice of the basis $b_i$ and $K-$invariant. It satisfies
\[
D^2=-(\Cas_\fk g\otimes 1 +\|\rho_\fk g\|^2)+(\Delta(\Cas_\fk k)+\|\rho_\fk k\|^2).
\]
In this formula, due to Parthasarathy \cite{P1},
\begin{itemize}
\item[-] $\Cas_\fk g$ and $\Cas_\fk k$ are the Casimir operators for $\fk g$ and
  $\fk k$ respectively,
\item[-]  $\fk h=\fk t +\fk a$ is a fundamental $\theta$-stable Cartan subalgebra
with compatible systems of positive roots for $(\fk g,\fk h)$ and $(\fk k,\fk t)$,
 \item[-] $\rho_\fk g$ and $\rho_\fk k$ are the corresponding half sums of positive roots,
\item[-] $\Delta:\fk k\to U(\fk g)\otimes C(\fk s)$ is given by $\Delta(X)=X\otimes 1+1\otimes\alpha(X)$,
where $\alpha$ is the action map $\fk k\to\fk s\fk o(\fk s)$ followed by the usual identifications
$\fk s\fk o(\fk s)\cong \bigwedge^2(\fk s)\hookrightarrow C(\fk s)$.
\end{itemize}
If $\pi$ is unitary, then $\C H_\pi\otimes Spin$ admits a
$K-$invariant inner product $\langle\ ,\ \rangle$ such that
$D$ is self adjoint with respect to this inner product. It follows that $D^2\geq 0$ on $\C H_\pi\otimes Spin$.
Using the above formula for $D^2$, we find that
\[
Cas_\fk g+||\rho_\fk g||^2\le Cas_{\Delta(\fk k)}+||\rho_\fk k||^2
\]
on any $K-$type $\tau$ occuring in $\C H_\pi\otimes Spin.$ Another
way of putting this is
\eq
\label{dirineq}
||\chi||^2\le ||\tau +\rho_\fk k||^2,
\eeq
for any $\tau$ occuring in $\C H_\pi\otimes Spin,$ where $\chi$ is the
infinitesimal character of $\pi.$ This is the Dirac inequality mentioned above.

These ideas are generalized by Vogan \cite{V} and Huang-Pand\v zi\'c \cite{HP1} as follows.
For an arbitrary admissible $(\fk g,K)$ module $\pi,$  we define {\it Dirac cohomology} of $\pi$ as
\[
H_D(\pi)=\ker D/(\ker D\cap \im D).
\]
Then $H_D(\pi)$ is a module for $\wti{K}$. If $\pi$ is unitary, $H_D(\pi)=\ker D=\ker D^2.$

The main result about $H_D$ is the following theorem conjectured by
Vogan.
\begin{theorem}\label{t:basic}
  \cite{HP1}
Assume that $H_D(\pi)$ is not zero, and let it contain an irreducible $\Kt$-module with highest weight $\tau$.
Let $\chi\in\fk h^*$ denote the infinitesimal character of $\pi$.
Then $w\chi=\tau +\rho_\fk k$ for some $w$ in the Weyl
group $W=W(\fk g,\fk h).$ More precisely, there is $w\in W$ such that
$w\chi\mid_\fk a=0$ and $w\chi\mid_\fk t=\tau +\rho_\fk k.$

Conversely, if $\pi$ is unitary and
$\tau=w\chi-\rho_\fk k$ is the highest weight of a $\Kt-$type occuring in $\pi\otimes Spin,$ then this
$\Kt-$type is contained in $H_D(\pi).$
\end{theorem}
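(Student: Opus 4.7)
My plan is to derive Theorem~\ref{t:basic} from an algebraic identity for the Dirac operator, originally conjectured by Vogan. The core step is to produce an algebra homomorphism $\zeta:Z(\fk g)\to Z(\fk k)$ such that for every $z\in Z(\fk g)$,
\[
z\otimes 1 \;=\; \Delta(\zeta(z)) + Da + aD
\]
for some $a\in U(\fk g)\otimes C(\fk s)$. Granted this identity, $z\otimes 1$ and $\Delta(\zeta(z))$ induce the same endomorphism of $H_D(\pi)=\ker D/(\ker D\cap\im D)$, because the error $Da+aD$ sends $\ker D$ into $\ker D\cap\im D$.

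To construct $\zeta$ I would filter $A:=U(\fk g)\otimes C(\fk s)$ by the natural degree (PBW degree on $U(\fk g)$ plus half the Clifford degree on $C(\fk s)$) and pass to the associated graded, where $D$ becomes a Koszul-type operator on $S(\fk g)\otimes\bigwedge^*\fk s$ encoding the inclusion $\fk s\hookrightarrow\fk g$. Using the decomposition $S(\fk g)\cong S(\fk k)\otimes S(\fk s)$, a standard cohomological computation identifies the relevant $\Kt$-invariant cohomology of this symbol with $\Delta(Z(\fk k))$. Lifting the identification back to $A$ yields $\zeta$ together with the displayed identity; its multiplicativity follows by applying the same argument to the products $z_1z_2$ and using uniqueness of the diagonal representative modulo $DA+AD$.

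The first half of the theorem is then immediate. If an irreducible $\Kt$-module of highest weight $\tau$ appears in $H_D(\pi)$, then $z\otimes 1$ acts on it by $\chi(z)$, while $\Delta(\zeta(z))$ acts by the $Z(\fk k)$-central character associated with $\tau+\rho_\fk k$. The resulting equality $\chi(z)=(\tau+\rho_\fk k)(\zeta(z))$ for all $z\in Z(\fk g)$, interpreted through the Harish-Chandra isomorphisms $Z(\fk g)\cong S(\fk h)^W$ and $Z(\fk k)\cong S(\fk t)^{W_\fk k}$, shows that $\zeta$ corresponds to the $\rho$-shifted restriction along $\fk t\hookrightarrow\fk h$; this produces $w\in W$ with $w\chi|_\fk t=\tau+\rho_\fk k$ and $w\chi|_\fk a=0$.

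For the converse, suppose $\pi$ is unitary and that $\pi\otimes Spin$ contains a $\Kt$-type of highest weight $\tau=w\chi-\rho_\fk k$. Parthasarathy's formula gives that $D^2$ acts on this $\Kt$-type by
\[
-\|\chi\|^2+\|\tau+\rho_\fk k\|^2=0,
\]
using Weyl-invariance $\|w\chi\|=\|\chi\|$. Unitarity makes $D$ self-adjoint on $\pi\otimes Spin$, so $\ker D=\ker D^2$, and the whole $\Kt$-type sits in $H_D(\pi)$. The principal obstacle, by a wide margin, is the homological identity $z\otimes 1=\Delta(\zeta(z))+Da+aD$; once that is in place the rest is essentially bookkeeping with Parthasarathy's formula and the two Harish-Chandra isomorphisms.
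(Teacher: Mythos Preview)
The paper does not supply its own proof of this theorem; it is quoted from \cite{HP1} as an established result. Your sketch is exactly the strategy of \cite{HP1}: the key algebraic identity $z\otimes 1-\Delta(\zeta(z))\in DA+AD$ for an algebra homomorphism $\zeta:Z(\fk g)\to Z(\fk k)$ is the main technical content of that paper, proved via the filtration and Koszul-type computation you indicate, and $\zeta$ is then identified with the $\rho$-shifted restriction map through the two Harish-Chandra isomorphisms. The converse direction, reducing to $D^2=0$ on the $\Kt$-type in question via Parthasarathy's formula and using self-adjointness of $D$ for unitary $\pi$, is also as in \cite{HP1}. So your proposal is correct and matches the original argument the paper is citing; there is nothing in the present paper to compare it against beyond that citation.
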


This result might suggest that difficulties should arise in passing between $K$-types of $\pi$
and $\Kt$-types of $\pi\otimes Spin$. For unitary $\pi$, the situation
is however greatly simplified by the Dirac inequality.
Namely, together with (\ref{dirineq}), Theorem
\ref{t:basic} shows that the infinitesimal characters $\tau+\rho_\fk k$ of
$\Kt$-types in Dirac cohomology have minimal possible norm. This means that whenever such
$E(\tau)$ appears in the tensor product of a $K$-type $E(\mu)$ of $\pi$ and a $\Kt$-type $E(\sigma)$
of $Spin$, it necessarily appears as the PRV component \cite{PRV}, i.e.,
\eq
\label{PRV}
\tau = \mu+\sigma^-\qquad\text{up to } W_\fk k,
\eeq
where $\sigma^-$ denotes the lowest weight of $E(\sigma)$.

For unitary representations, the relation of Dirac cohomology to $(\fk g,K)$ cohomology is as follows.
(For more details, see \cite{HP1} and \cite{HKP}.)
One can write the $K$-module $\bigwedge(\fk s)$ as $Spin\otimes Spin$ if $\dim \fk s$ is even, or
twice the same space if $\dim\fk s$ is odd. It follows that
\eqn
\Hom_K(\bigwedge(\fk s),\pi\otimes F^*)=\Hom_{\Kt}(F\otimes Spin,\pi\otimes Spin),
\eeqn
or twice the same space if $\dim\fk s$ is odd. Since $D^2\geq 0$ on $\pi\otimes Spin$ and $D^2\leq 0$
on $F\otimes Spin$, it follows that
\eqn
H(\fk g,K;\pi\otimes F^*)=\Hom_{\Kt}(H_D(F),H_D(\pi)),
\eeqn
or twice the same space if $\dim\fk s$ is odd. In particular, if $\pi$ is unitary and it has nontrivial 
$(\fk g,K)$ cohomology, then $H_D(\pi)\ne 0.$

{For a representation to have nonzero $(\fk g, K)$ cohomology with coefficients
in a finite dimensional representation, the infinitesimal character
must be regular integral. Conversely, assume that $\pi$ is unitary with regular integral
infinitesimal character. Then the main result of \cite{SR} implies that $\pi$ is an $A_\fk q(\lambda)$-module,
and therefore it has nonzero $(\fk g, K)$ cohomology by the results of \cite{VZ}. (Hence it also has
nonzero Dirac cohomology, as explained above.)}

The hope is that unitary representations with Dirac cohomology will have
similarly nice properties. For $H_D(\pi),$ to be nonzero, Theorem
\ref{t:basic} provides a restriction on the infinitesimal character
$\chi_\pi$ which is weaker than regular integral. Namely, because
$\chi_\pi|_{\fk t}$ must be conjugate to $\tau+\rho_c$, it must be
regular integral for the roots in $\fk k.$ {Thus one expects to have representations with
nonzero Dirac cohomology with infinitesimal character that is not regular integral. Indeed,
we will describe many such examples in this paper. On the other hand, the conditions of
regularity and integrality with respect to $\fk k$ is still quite restrictive and we
cannot expect such representations to
capture the entire unitary dual.
The relatively few unitary representations
that have nonzero Dirac cohomology are however the borderline cases for unitarity in
the sense of Dirac inequality.

The paper is organized as follows. In Section 2 we first recall
  some well known facts about complex groups and their
  representations. Then we prove one of the main results of the paper,
  which says that a representation which is unitarily induced from a
  representation with $H_D\neq 0$ must have $H_D\neq 0$, provided
  twice its infinitesimal character is regular and integral.
In Section 3 we strengthen this result by actually calculating $H_D$ 
for representations induced from unitary characters whose
infinitesimal character is $\rho/2$. In Section 4 we generalize this
result to $GL(n,\bC)$ and more general infinitesimal characters
(we do not prove the full conjecture). 
Finally in Section 5 we discuss unipotent representations with
non-vanishing Dirac cohomology. In summary, the main general results are Theorem
\ref{thm:uind}, and Theorem \ref{thm:halfrho}. The other results of the paper
provide evidence for conjectures   \ref{conj:main}, \ref{conj:3.4}, and
\ref{conj:gl}. We plan to investigate the validity of these
conjectures in future papers. 

\smallskip
We dedicate this paper to Henri Moscovici. Henri introduced
the first author to the  beautiful theory of the heat kernel and index
theory on semisimple groups.

\section{Complex groups}
\label{sec:2.2}

\subsection{General setting}
Let $G$ be a complex group viewed as a real group. Let $H=TA$
be a $\theta-$stable Cartan subgroup with Lie algebra $\fk
h_0=\fk t_0 + \fk a_0$, a $\theta-$stable Cartan subalgebra. Let $B=HN$ be
a Borel subgroup. We identify $\fk h\cong\fk h_0\times \fk h_0,$ and
the complexifications
\begin{equation}
  \label{eq:cxcsg}
  \fk t\cong\{(x,-x)\ : x\in\fk h_0\},\qquad \fk a\cong\{ (x,x)\:\
  x\in\fk h_0\}.
\end{equation}

Admissible irreducible representations of $G$ are parametrized by
conjugacy classes of pairs $(\la_L,\la_R)\in\fk h_0\times\fk
h_0$ under the diagonal $\Delta(W)\subset W\times W$.
More precisely the following theorem holds.

Let $(\la_L,\la_R)$ be such that
$\mu:=\la_L-\la_R$ is integral. Write
$\nu:=\la_L+\la_R.$ We can view $\mu$ as a weight of $T$  and $\nu$ as
a character of $A.$ Let
\begin{equation*}
  X(\la_L,\la_R):=\Ind_B^G[\bC_\mu\otimes\bC_\nu\otimes\one]_{K-finite}.
\end{equation*}
\begin{theorem}[\cite{Zh}, \cite{PRV}]
The $K-$type with extremal weight $\mu$ occurs with multiplicity
1, so let $L(\la_L,\la_R)$ be the unique irreducible subquotient
containing this $K-$type.

\begin{enumerate}
\item Every irreducible admissible $(\fk g,K)$
module is of the form $L(\la_L,\la_R).$
\item Two such modules $L(\la_L,\la_R)$ and $L(\la_L',\la_R')$ are equivalent
if and only if the parameters are conjugate by $\Delta(W)\subset
W_c\cong W\times W.$ In other words, there is $w\in W$ such that
$w\mu=\mu'$ and $w\nu=\nu'.$
\item $L(\la_L,\la_R)$ admits a nondegenerate hermitian form if and
  only if there is $w\in W$ such that $w\mu=\mu,$ $w\nu=-\ovl{\nu}.$
\end{enumerate}
\end{theorem}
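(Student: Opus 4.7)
The plan is to reduce the entire statement to an analysis of the $K$-types of the principal series $X(\la_L,\la_R)$, using the Iwasawa decomposition $G=KAN$ together with Frobenius reciprocity, and then to combine this with infinitesimal character information to pin down parameters. First I would restrict $X(\la_L,\la_R)$ to $K$ via Iwasawa, obtaining $X(\la_L,\la_R)|_K\cong \Ind_T^K \bC_\mu$, so that Frobenius reciprocity gives
\[
\dim\Hom_K\bigl(E(\tau),X(\la_L,\la_R)\bigr)=\dim E(\tau)_\mu.
\]
Taking $\tau$ to be the dominant $W_K$-translate of $\mu$, the weight $\mu$ is extremal in $E(\tau)$ and therefore occurs with multiplicity one, while any other $E(\tau')$ containing $\mu$ as a weight must have $\tau'$ strictly higher in the dominance order. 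This both establishes the multiplicity-one claim and identifies $E(\tau)$ as the unique minimal $K$-type of $X(\la_L,\la_R)$, so the definition of $L(\la_L,\la_R)$ is unambiguous.

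For assertion (1), the Casselman subrepresentation theorem realizes any irreducible admissible $(\fk g,K)$-module $\pi$ as a subquotient of some principal series $X(\la_L,\la_R)$; choosing the parameters so that the minimal $K$-type of $\pi$ is the $E(\tau)$ above identifies $\pi$ with $L(\la_L,\la_R)$. For assertion (2), an isomorphism $L(\la_L,\la_R)\cong L(\la_L',\la_R')$ forces the infinitesimal characters to agree, giving $(\la_L',\la_R')=(w_1\la_L,w_2\la_R)$ for some $(w_1,w_2)\in W\times W$, while matching of minimal $K$-types yields $w_1\la_L-w_2\la_R=w(\la_L-\la_R)$ for some $w\in W_K=W$. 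Combining the two identities and using the stabilizers of $\la_L,\la_R$ in $W$ at singular parameters produces a single diagonal $w\in W$ that simultaneously conjugates both components. Conversely, $\Delta(w)$-conjugation of the inducing data provides an explicit isomorphism between the principal series, hence between their distinguished subquotients.

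For assertion (3), I would compute the hermitian dual of $X(\la_L,\la_R)$ by dualizing the inducing character: since $T$ is compact $\bC_\mu$ is unitary and self-hermitian, while the character $\bC_\nu$ of the vector group $A$ hermitian-dualizes to $\bC_{-\ovl{\nu}}$, and after an intertwining integral reconverting the opposite Borel back to $B$ the result is equivalent to a principal series whose parameters correspond to $\mu$ and $-\ovl{\nu}$. The existence of a nondegenerate hermitian form on $L(\la_L,\la_R)$ is equivalent to $L(\la_L,\la_R)$ being isomorphic to its hermitian dual, and applying the equivalence criterion from (2) to this specific pair of parameters yields exactly the stated condition: a single $w\in W$ satisfies $w\mu=\mu$ and $w\nu=-\ovl{\nu}$.

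The principal obstacle is the bookkeeping in assertion (2): infinitesimal character equality a priori supplies two potentially unrelated Weyl group elements $w_1,w_2$, and promoting these to a single diagonal $w$ demands a careful interplay between the minimal $K$-type identity and the stabilizer structure at non-regular parameters $\la_L,\la_R$. Once the multiplicity-one extremal $K$-type from Frobenius reciprocity and the Casselman subquotient theorem are in place, the remaining arguments are routine Harish-Chandra module manipulations.
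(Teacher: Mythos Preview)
The paper does not actually prove this theorem: it is quoted from \cite{Zh} and \cite{PRV}, and immediately afterwards the authors remark that it is a special case of the Langlands classification as in \cite{Kn}. So there is no ``paper's own proof'' to compare against; the result is taken as background.

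Your sketch is a reasonable and essentially standard outline of how one would prove the statement directly. The Frobenius reciprocity computation of $K$-types is exactly right and is the core of the PRV argument for multiplicity one. Invoking Casselman's subrepresentation theorem for (1) is anachronistic relative to the original references (PRV and Zhelobenko handled this by hand for complex groups), but it is certainly a legitimate route. For (3), reducing to (2) via the hermitian dual is the right idea. You have also correctly flagged the genuine content in (2): upgrading the pair $(w_1,w_2)\in W\times W$ coming from infinitesimal character equality to a single diagonal $w$ is where the work lies, and at singular parameters this really does require the stabilizer argument you allude to. That step is not quite ``routine'' and would need to be written out carefully, but you have identified it as the crux, which is the main thing.
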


This result is a special case of the more general Langlands classification, which can be found
for example in the book \cite{Kn}.

We next describe the spin representation of the group $\Kt$.
Let $\rho:=\frac12\sum_{\al\in\Delta(\fk b,\fk h)} \al.$  Let $r$ denote the rank of $\fk g$.

\begin{lemma}
\label{spin}
The $Spin$ representation viewed as a $\Kt$-module is a direct sum of $[\frac{r}{2}]$ copies of
the irreducible representation $E(\rho)$ of $\Kt$ with highest weight $\rho.$
\end{lemma}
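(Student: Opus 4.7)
The plan is to determine the $\Kt$-module structure of $Spin$ by comparing characters on $\fk t$. The key simplification for complex $G$ is that $\fk s$ and $\fk k$ are isomorphic as $\fk k$-modules: multiplication by $i$ furnishes a $K$-equivariant $\bR$-linear isomorphism $\fk k_0\to\fk s_0$, which after complexification identifies $\fk s$ with $\fk k$, and hence, via the diagonal embedding $\fk k\hookrightarrow \fk g\cong \fk g^\bC\times \fk g^\bC$, with $\fk g^\bC$ carrying its adjoint action. Under this identification the weights of $\fk t$ on $\fk s$ are exactly the roots $\Delta(\fk g^\bC,\fk h^\bC)$, each with multiplicity one, together with the zero weight of multiplicity equal to the rank.

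Next I would compute $\operatorname{ch}(Spin)\big|_{\fk t}$. Choosing any isotropic polarization that pairs $\fk g^\bC_\al$ with $\fk g^\bC_{-\al}$ for each positive root and any isotropic splitting of the zero weight space $\fk h^\bC$, the standard spin character formula yields
\[
\operatorname{ch}(Spin)\big|_{\fk t} \;=\; c\prod_{\al>0}\bigl(e^{\al/2}+e^{-\al/2}\bigr),
\]
where $c$ is the dimension of the spin module attached to the zero-weight part $\fk h^\bC$ and is a power of $2$ depending on the parity of the rank.

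Then I would invoke the classical identity
\[
\operatorname{ch} E(\rho)\;=\;\prod_{\al>0}\bigl(e^{\al/2}+e^{-\al/2}\bigr),
\]
which follows from the Weyl character formula applied to the irreducible $\fk g^\bC$-module of highest weight $\rho$. Combining the two displays gives $\operatorname{ch}(Spin) = c\cdot \operatorname{ch} E(\rho)$; since $\Kt$ is compact, equality of characters forces an isomorphism $Spin\cong E(\rho)^{\oplus c}$ and reduces the lemma to matching the constant $c$ with the stated multiplicity, which is a straightforward dimension count.

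The only subtle point is that $\rho$ is integral for $\Kt$ but not always for $K$, so the argument must take place on the spin double cover throughout; once the identification $\fk s\cong\fk k$ is in hand, however, the remainder is a purely combinatorial character calculation and poses no real obstacle.
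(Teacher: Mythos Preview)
Your argument is correct and takes a genuinely different route from the paper. The paper simply invokes a general structural result (Borel--Wallach, Lemma~II.6.9): the $\Kt$-constituents of $Spin$ are the modules $E(\rho_\fk n)$ indexed by positive systems for $\fk g$ compatible with a fixed one on $\fk k$, each occurring with a multiplicity determined by $\dim\fk a$; it then observes that in the complex case there is a unique compatible positive system and $\dim\fk a=r$. Your approach instead works from scratch: the identification $\fk s\cong\fk k$ (special to the complex case) gives the $\fk t$-weights of $\fk s$ explicitly, and a direct spin-character computation combined with the Weyl formula $\operatorname{ch} E(\rho)=\prod_{\al>0}(e^{\al/2}+e^{-\al/2})$ finishes. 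Your route is more elementary and self-contained, at the price of writing out the character calculation; the paper's is shorter but leans on an external reference.

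Two minor remarks. The phrase ``a power of $2$ depending on the parity of the rank'' is not quite right: the constant you obtain is $c=2^{[r/2]}$, which depends on $r$ itself. And when you carry out that dimension count you will get $2^{[r/2]}$ rather than the $[\tfrac{r}{2}]$ appearing in the lemma; this is a typo in the paper (and in its citation of the Borel--Wallach multiplicity), and your computation recovers the correct value.
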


\begin{proof} The general description of the spin module is given for example in \cite{BW}, Lemma 6.9.
It says that the irreducible components of $Spin$ correspond to
choices of positive roots for $\fk g$ compatible with a fixed choice of positive roots for $\fk k$.
The multiplicity for each component is $[\frac{\dim\fk a}{2}]$.
In the complex case, there is only one such choice of positive roots, and $\dim\fk a$ is $r$.
\end{proof}

Lemma \ref{spin} implies that in calculating $H_D(\pi)$ for unitary $\pi$, one can replace $Spin$ by
$E(\rho)$ and then in the end simply multiply the result by multiplicity $[\frac{r}{2}]$.

By Theorem \ref{t:basic} and the above remark, a unitary representation $L(\la_L,\la_R)$
has Dirac cohomology if and only if there is $(w_1,w_2)\in W_c$ such that
\begin{equation}
  \label{eq:2.2.1}
  w_1\la_L+w_2\la_R=0,\qquad w_1\la_L-w_2\la_R=\tau +\rho
\end{equation}
where $\tau$ is the highest weight of a $\Kt-$type which occurs in $L(\la_L,\la_R)\otimes E(\rho).$
More precisely
\begin{equation}
  \label{eq:2.2.2}
 [H_D(\pi):E(\tau)]=\sum_\mu \,\,[\frac{r}{2}]\,\,[\pi:E(\mu)]\,\,[E(\mu)\otimes
  E(\rho):E(\tau)],
\end{equation}
where the sum is over all $K$-types $E(\mu)$ of $\pi$.

Write $\la:=\la_L.$ The first equation in (\ref{eq:2.2.1}) implies that
$\la_R=-w_2^{-1}w_1\la.$ The second one says that
$2w_1\la=\tau +\rho,$ so that $w_1\la$ must be regular, and $2w_1\la$
regular integral.  Replace $w_1\la$ by $\la.$
Thus we can write the  parameter of $\pi$ as
$(\la,-s\la)$ with $\la$ dominant, and $s\in W.$ Since
$L(\la,-s\la)$ is assumed unitary, it is hermitian. So there is $w\in
W$ such that
\begin{equation}
  \label{eq:cxherrmitian}
  w(\la+s\la)=\la + s\la,\quad w(\la-s\la)=-\la +s\la.
\end{equation}
This implies that $w\la=s\la,$ so $w=s$ since $\la$ is regular, and
$ws\la=s^2\la=\la$. So $s$ must be an involution.

Thus to compute $H_D(\pi)$ for $\pi$ that are unitary, we need
\begin{enumerate}
\item $L(\la,-s\la)$ that are unitary with
  \begin{equation}
    \label{eq:2.2.3}
 2\la=\tau+\rho,
  \end{equation}
in particular  $2\la$ is regular integral,
\item the multiplicity
  \begin{equation}
    \label{eq:2.2.4}
 \big[\ L(\la,-s\la)\otimes E(\rho)\ :\ E(\tau)\ \big].
  \end{equation}
\end{enumerate}

\subsection{Unitarily induced representations}
\label{sec:uind}
We consider the Dirac cohomology of a $\pi$ which is unitarily induced
from a unitary  representation of the Levi component of a
  parabolic subgroup $P=MN$ with nonzero Dirac cohomology. We denote the representation we are inducing
  from by $\pi_\fk m\otimes\bC_\xi$, where $\xi$ is a unitary character of $M$.

We choose a positive system $\Delta$ so that $\xi$ is dominant. Assume
that $P$ is such that $\fk p=\fk m +\fk n$ is the parabolic subalgebra determined by $\xi.$
Then $\Delta=\Delta_\fk m\cup\Delta(\fk n)$, where $\Delta_\fk m$ is a positive
root system for $\fk m$, while $\Delta(\fk n)$ denotes the set of roots such that the corresponding
root spaces are contained in $\fk n$.

The representation
$\pi_\fk m=L(\la_\fk m,-s\la_\fk m)$ satisfies
\begin{align}
&\la_\fk m +s\la_\fk m=\mu_\fk m,\quad &&2\la_\fk m=\mu_\fk m+\nu_\fk m,
\label{eq:uind1}\\
&\la_\fk m-s\la_\fk m=\nu_\fk m,\quad &&2s\la_\fk m=\mu_\fk m-\nu_\fk m.
\label{eq:uind2}
\end{align}
By assumption, $\pi_\fk m$ has Dirac cohomology. So
\begin{equation}
  \label{eq:uind3}
2\la_\fk m=\mu_\fk m +\nu_\fk m=\tau_\fk m+\rho_\fk m.
\end{equation}
Here $\rho_\fk m$ is the half sum of the roots in $\Delta_\fk m$ and $\tau_\fk m$
is dominant with respect to $\Delta_\fk m$. Also,
\begin{equation}
  \label{eq:uind7}
\big[\pi_\fk m\otimes F(\rho_\fk m)\ :\ F(\tau_\fk m)\big] \ne 0.
\end{equation}
Here and in the following $F(\chi)$ denotes the finite-dimensional $\fk m$-module with highest weight $\chi$,
to distinguish it from $E(\chi)$ which denotes the finite-dimensional $\fk g$-module with highest weight $\chi$.
We are also going to use analogous notation when $\chi$ is not necessarily dominant, but any extremal weight of
the corresponding module.

The induced module $\pi=Ind_P^G [\pi_\fk m\otimes\xi]$
has parameters
\begin{equation}
  \label{eq:uind4}
  \begin{aligned}
    &\la=\xi/2+\la_\fk m,\quad &&\mu=\xi +\mu_\fk m,\\
    &s\la=\xi/2+s\la_\fk m,\quad &&\nu=\nu_\fk m.
  \end{aligned}
\end{equation}
In order to have Dirac cohomology, $2\la$ must be regular integral; so
assume $\xi$ is such that this is the case. Let $\Delta'$  be the positive system such that $\la$ is
dominant. Then
\begin{equation}
  \label{eq:uind4a}
  2\la=\xi +2\la_\fk m=\xi +\mu_\fk m +\nu_\fk m=\tau'+\rho'.
\end{equation}
Here $\rho'$ is the half sum of the roots in $\Delta'$, and $\tau'$
is dominant with respect to $\Delta'$.
In order to see that $\pi$ has nonzero Dirac cohomology, we need

\begin{lemma}
\label{resErho}
The restriction of the $\fk g$-module $E(\rho)$ to $\fk m$ is isomorphic to $F(\rho_\fk m)\otimes \bC_{-\rho_\fk n}\otimes \bigwedge^*\fk n$,
where $F(\rho_\fk m)$ denotes the irreducible $\fk m$-module with highest weight $\rho_\fk m$ and $\rho_\fk n$ denotes the half sum of roots
in $\Delta(\fk n)$.
\end{lemma}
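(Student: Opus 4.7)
The plan is to compute the $\fk h$-character of both sides and then invoke the fact that two finite-dimensional semisimple $\fk m$-modules with equal formal characters are isomorphic.

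First I would apply the Weyl character formula to the $\fk g$-module $E(\rho)$. With highest weight $\rho$, the numerator is $\sum_{w\in W}(-1)^{\ell(w)} e^{w(2\rho)}$, which by the Weyl denominator identity applied with $\rho$ replaced by $2\rho$ equals $\prod_{\al\in\Delta}(e^\al-e^{-\al})$. Dividing by the usual denominator $\prod_{\al\in\Delta}(e^{\al/2}-e^{-\al/2})$ and using the factorization $e^\al-e^{-\al}=(e^{\al/2}-e^{-\al/2})(e^{\al/2}+e^{-\al/2})$ gives the clean identity
\[
\mathrm{ch}\,E(\rho)\;=\;\prod_{\al\in\Delta}\bigl(e^{\al/2}+e^{-\al/2}\bigr).
\]
The same computation for the irreducible $\fk m$-module $F(\rho_\fk m)$ yields $\mathrm{ch}\,F(\rho_\fk m)=\prod_{\al\in\Delta_\fk m}(e^{\al/2}+e^{-\al/2})$, while the standard exterior algebra character combined with $\rho_\fk n=\tfrac12\sum_{\beta\in\Delta(\fk n)}\beta$ gives
\[
e^{-\rho_\fk n}\cdot\mathrm{ch}\,\bigwedge\nolimits^{*}\fk n\;=\;\prod_{\beta\in\Delta(\fk n)}\bigl(e^{\beta/2}+e^{-\beta/2}\bigr).
\]
Since $\Delta=\Delta_\fk m\sqcup\Delta(\fk n)$, multiplying the two displays above shows that the right-hand side of the lemma has the same character as $E(\rho)|_\fk m$.

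To pass from character equality to an $\fk m$-module isomorphism, I would note that both sides are semisimple as $\fk m$-modules. The restriction $E(\rho)|_\fk m$ is semisimple because $\fk m$ is reductive, its derived subalgebra acts completely reducibly on every finite-dimensional module, and the center $\fk z(\fk m)\subset\fk h$ acts diagonally via the $\fk h$-weight decomposition of $E(\rho)$. On the right-hand side, $F(\rho_\fk m)$ is irreducible by definition, the one-dimensional $\bC_{-\rho_\fk n}$ is a legitimate $\fk m$-character because $\rho_\fk n$ is $W_\fk m$-invariant and hence vanishes on $[\fk m,\fk m]\cap\fk h$, and $\bigwedge^{*}\fk n$ is semisimple by the same weight-decomposition argument. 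Two semisimple finite-dimensional modules of a reductive Lie algebra with the same formal $\fk h$-character are isomorphic, completing the proof.

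There is no serious obstacle here; the only substantive computation is the character identity $\mathrm{ch}\,E(\rho)=\prod_{\al\in\Delta}(e^{\al/2}+e^{-\al/2})$, a classical consequence of two applications of the Weyl denominator formula. The factorization $\Delta=\Delta_\fk m\sqcup\Delta(\fk n)$ does the rest of the bookkeeping for free.
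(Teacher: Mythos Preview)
Your proof is correct. The character identity $\mathrm{ch}\,E(\rho)=\prod_{\al\in\Delta}(e^{\al/2}+e^{-\al/2})$ follows exactly as you say: substituting $e\mapsto e^2$ into the Weyl denominator formula gives $\sum_{w}(-1)^{\ell(w)}e^{2w\rho}=\prod_{\al\in\Delta}(e^\al-e^{-\al})$, and dividing by the ordinary denominator and factoring each term yields the product. The factorization over $\Delta=\Delta_\fk m\sqcup\Delta(\fk n)$ then matches the two sides, and your semisimplicity check (the center of $\fk m$ acts through $\fk h$-weights, $\rho_\fk n$ is $W_\fk m$-fixed) is the right way to finish.

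The paper argues differently. It invokes Lemma~\ref{spin}, which identifies $E(\rho)$ with the spin module for $\fk s$ up to a global multiplicity, and likewise $F(\rho_\fk m)$ with the spin module for $\fk m$. Since $\fk g$ and $\fk m$ have the same rank, the multiplicities cancel. The paper then realizes $Spin_\fk g$ concretely as $\bigwedge^*\fk g^+$ for a maximal isotropic $\fk g^+=\fk m^+\oplus\fk n$, giving $Spin_\fk g\cong Spin_\fk m\otimes\bigwedge^*\fk n$ as vector spaces; the twist by $\bC_{-\rho_\fk n}$ is read off by comparing highest weights. This is a constructive argument that produces an explicit isomorphism and keeps the spin-module picture in view, which is natural given the Dirac-cohomology context. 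Your character argument is more self-contained and avoids the spin-module machinery altogether, at the cost of being less explicit about \emph{why} the decomposition holds. Either route is perfectly adequate for the use made of the lemma downstream.
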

\begin{proof}
Since $\fk g$ and $\fk m$ have the same rank, we can use Lemma \ref{spin} to replace $E(\rho)$ and $F(\rho_\fk m)$ by the corresponding spin modules.
Recall that the spin module $Spin_\fk m$ can be constructed as $\bigwedge^*\fk m^+$, where $\fk m^+$ is a maximal isotropic subspace of $\fk m$.
We can choose $\fk m^+$ so that it contains all the positive root subspaces for $\fk m$, as well as a maximal isotropic subspace $\fk h^+$ of the Cartan subalgebra $\fk h$. To construct $Spin_\fk g$, we can use the maximal isotropic subspace $\fk g^+=\fk m^+\oplus\fk n$ of $\fk g$.
It follows that $Spin_\fk g=Spin_\fk m\otimes \bC_{-\rho_\fk n}\otimes \bigwedge^*\fk n$. The $\rho$-shift comes from the fact that the highest weight
of $Spin_\fk m$ is $\rho_\fk m$ and the highest weight of $Spin_\fk g$ is $\rho$, while the highest weight of $Spin_\fk m\otimes \bigwedge^*\fk n$
is $\rho_\fk m+2\rho_\fk n=\rho+\rho_\fk n$.
\end{proof}

Since $\pi$ is unitary, the computation for its Dirac cohomology is
\begin{equation}
  \label{eq:uind5}
\begin{aligned}
&\big[\pi\otimes E(\rho)\ :\ E(\tau')\big]=
\big[\pi_\fk m\otimes\bC_\xi\otimes E(-\tau')\mid_\fk m\ :\ E(\rho)\mid_\fk m\big]=\\
&\big[\pi_\fk m\otimes\bC_\xi\otimes E(-\tau')\mid_\fk m
\ :\ F(\rho_\fk m)\otimes\bC_{-\rho_\fk n}\otimes\bigwedgestar\fk n\big]=\\
&\big[\bC_{\xi+\rho_\fk n}\otimes\pi_\fk m\otimes
F(\rho_\fk m)\otimes E(-\tau')\mid_\fk m \ :\ \bigwedgestar\fk n\big].
\end{aligned}
\end{equation}
Here the first equality used Frobenius reciprocity, while the second equality used Lemma \ref{resErho}.
Note that the dual of $E(\tau')$ is the module $E(-\tau')$ which has lowest weight $-\tau'$ with respect to
$\Delta'$.

Using (\ref{eq:uind4a}) and (\ref{eq:uind3}), we can write
\begin{equation}
  \label{eq:uind6}
-\tau'=-2\la+\rho'=-\xi -\mu_\fk m-\nu_\fk m+\rho'=-\xi -\tau_\fk m
-\rho_\fk m+\rho'.
\end{equation}
The positive system $\Delta\supset\Delta_\fk m$ was chosen so that
$\xi$ is dominant, and $2\la_\fk m$ was dominant for $\Delta(\fk m).$
Thus $\Delta_\fk m\subset \Delta,\Delta'$.
Because of (\ref{eq:uind7}), the LHS of the last line of (\ref{eq:uind5}) contains the
representation

\eqn
\bC_{\xi+\rho_\fk n}\otimes F(\tau_\fk m)\otimes E(-\tau')\mid_\fk m  \supseteq
\bC_{\xi+\rho_\fk n}\otimes F(\tau_\fk m-\tau').
\eeqn
Namely, $F(\tau_\fk m-\tau')$ is the PRV component of $F(\tau_\fk m)\otimes F(-\tau')\subseteq F(\tau_\fk m)\otimes E(-\tau')\mid_\fk m$.
By (\ref{eq:uind4a}) and (\ref{eq:uind3}), $\tau_\fk m-\tau'=-\xi-\rho_\fk m+\rho'$, so
\eqn
\bC_{\xi+\rho_\fk n}\otimes F(\tau_\fk m-\tau')\supseteq F(\rho_\fk n-\rho_\fk m+\rho')=F(w_\fk m\rho+\rho'),
\eeqn
where $w_\fk m$ is the longest element of the Weyl group of $\fk m$. Namely, $w_\fk m$ sends all roots in $\Delta_\fk m$ to negative roots for
$\fk m$, while permuting the roots in $\Delta(\fk n)$, so $w_\fk m\rho=-\rho_\fk m+\rho_\fk n$.

So we see that the LHS of the last line of (\ref{eq:uind5}) contains the $\fk m$-module $F(w_\fk m\rho+\rho')=F(w_\fk m\rho'+\rho).$ Namely,
both $w_\fk m\rho+\rho'$ and $w_\fk m\rho'+\rho=w_\fk m(w_\fk m\rho+\rho')$ are extremal weights for the same module.

We will show that
\begin{equation}
  \label{eq:hrho}
\big[ F(w_\fk m\rho'+\rho):\bigwedgestar\fk n\big]\ne 0.
\end{equation}
This will prove that (\ref{eq:uind5}) is nonzero, and consequently that $\pi$ has nonzero Dirac cohomology.

Note that $w_\fk m\rho'+\rho$ is a sum of roots in $\Delta(\fk n),$
and antidominant for $\Delta_\fk m,$ because for any simple $\gamma\in\Delta_\fk m$,
$\langle\rho',\check\gamma\rangle\in \bN^+$ and
$\langle\rho,\check\gamma\rangle=1$. Moreover,
\begin{equation}
  \label{eq:hrho8}
  w_\fk m\rho'+\rho=\sum_{\langle\al,w_\fk m\rho'\rangle >0,\
    \langle\al,\rho\rangle >0} \al.
\end{equation}

To show that (\ref{eq:hrho}) holds, it is enough to show that
\eq
\label{lowvector}
v:=\bigwedge_{\langle\al,\rho\rangle
>0,\ \langle\al, w_\fk m\rho'\rangle >0}\ e_\al\quad\in\bigwedgestar\fk n
\eeq
is a lowest weight vector
for $\Delta_\fk m$. Here $e_\al$ denotes a root vector for the root $\alpha$.

Let $\gamma\in\Delta_\fk m.$ Then, up to constant factors,
\begin{equation}
  \label{eq:hrho9}
  \ad e_{-\gamma}e_\al=
  \begin{cases}
    0 &\text{ if } \al-\gamma \text{ is not a root,}\\
    e_{-\gamma +\al} &\text{ if } \al-\gamma \text{ is a root.}\\
  \end{cases}
\end{equation}
But $\langle -\gamma,w_\fk m\rho'\rangle >0$, and
$\langle\al,w_\fk m \rho'\rangle >0$ by assumption, so
\begin{equation}
  \label{eq:hrho10}
  \langle-\gamma+\al,w_\fk m\rho'\rangle > 0+0=0.
\end{equation}
Also, if $-\gamma+\alpha$ is a root, then it is in $\Delta(\fk n)$, since $\alpha\in\Delta(\fk n)$
and $\fk n$ is an $\fk m$-module. So $\langle-\gamma+\al,\rho\rangle > 0$.
Thus every $e_{-\gamma +\al}$ appearing in (\ref{eq:hrho9}) is one of the factors in (\ref{lowvector}).

The claim now follows from the formula
\begin{equation}
  \label{eq:hrho11}
  \ad e_{-\gamma}\bigwedge e_\al=\sum e_{\al_1}\wedge \dots \wedge
  \ad e_{-\gamma}e_{\al_i}\wedge \dots .
\end{equation}
In each summand either $\ad e_{-\gamma}e_{\al_i}$ equals 0, or is a multiple of one of the
root vectors already occurring in the same summand. So $\ad e_{-\gamma} v=0.$
We have proved
\begin{theorem}
\label{thm:uind}
Let $P=MN$ be a parabolic subalgebra of $G$ and let $\Delta=\Delta_\fk m\cup\Delta(\fk n)$ be the corresponding system
of positive roots. Let $\pi_\fk m$ be an irreducible unitary representation of $M$ with nonzero Dirac cohomology, and let
$\xi$ be a unitary character of $M$ which is dominant with respect to $\Delta$. Suppose that twice the infinitesimal character of $\pi=Ind_P^G [\pi_\fk m\otimes\xi]$ is regular and integral. Then $\pi$ has nonzero Dirac cohomology.\qed
\end{theorem}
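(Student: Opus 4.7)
The plan is to invoke the converse half of Theorem \ref{t:basic}: to prove $H_D(\pi) \ne 0$ it suffices to exhibit a $\Kt$-type $E(\tau')$ occurring in $\pi \otimes E(\rho)$ whose highest weight satisfies $\tau' + \rho = 2\la$, where $\la$ is the dominant parameter of $\pi$ described in Section \ref{sec:2.2}. The hypothesis that $2\la$ is regular integral is exactly what lets us form such a pair $(\la,\tau')$, and the job then splits into a Frobenius-reciprocity reduction followed by a combinatorial multiplicity check inside $\bigwedgestar \fk n$.

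First I would set up the parameters. Write $\pi_\fk m = L(\la_\fk m, -s\la_\fk m)$ and use the hypothesis $H_D(\pi_\fk m) \ne 0$ via Theorem \ref{t:basic} (applied to $M$) to obtain $\tau_\fk m$ with $2\la_\fk m = \tau_\fk m + \rho_\fk m$ and with $F(\tau_\fk m)$ appearing in $\pi_\fk m \otimes F(\rho_\fk m)$. The induced module $\pi$ has parameter $\la = \xi/2 + \la_\fk m$; I pick the positive system $\Delta'$ for which $\la$ is dominant and set $2\la = \tau' + \rho'$. Note that $\Delta_\fk m \subseteq \Delta'$ because $2\la_\fk m$ is already $\Delta_\fk m$-dominant, so $\rho_\fk m$ behaves coherently on both sides.

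The main computation then proceeds in two steps. Frobenius reciprocity rewrites $[\pi \otimes E(\rho) : E(\tau')]$ as an $\fk m$-multiplicity of $E(\rho)|_\fk m$ in $\pi_\fk m \otimes \bC_\xi \otimes E(-\tau')|_\fk m$, and Lemma \ref{resErho} converts this into asking for $\bigwedgestar \fk n$ inside $\bC_{\xi + \rho_\fk n} \otimes \pi_\fk m \otimes F(\rho_\fk m) \otimes E(-\tau')|_\fk m$. Substituting the PRV component $F(\tau_\fk m - \tau')$ of $F(\tau_\fk m) \otimes F(-\tau')$ and using the identity $\tau_\fk m - \tau' = -\xi - \rho_\fk m + \rho'$ from the two $\rho$-relations, the problem reduces to showing that the $\fk m$-module $F(w_\fk m\rho' + \rho)$ appears in $\bigwedgestar \fk n$, where $w_\fk m$ is the longest element of the Weyl group of $\fk m$.

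The heart of the argument, and what I expect to be the main obstacle, is to exhibit an explicit $\Delta_\fk m$-lowest weight vector of weight $w_\fk m \rho' + \rho$ in $\bigwedgestar \fk n$. My candidate is
\[
v = \bigwedge_{\substack{\al \in \Delta(\fk n) \\ \langle \al,\rho\rangle > 0,\ \langle \al, w_\fk m\rho'\rangle > 0}} e_\al.
\]
The weight of $v$ is correct by a direct pairing calculation, so the remaining issue is verifying $\ad e_{-\gamma} v = 0$ for every simple $\gamma \in \Delta_\fk m$. Two facts do the work: $[\fk m, \fk n] \subseteq \fk n$ ensures that any nonzero bracket $[e_{-\gamma}, e_\al]$ lies in $\fk n$, so $\langle \al - \gamma, \rho\rangle > 0$; and $\langle -\gamma, w_\fk m\rho'\rangle > 0$ (since $w_\fk m\rho'$ is anti-dominant for $\Delta_\fk m$) combined with $\langle \al, w_\fk m\rho'\rangle > 0$ gives $\langle \al - \gamma, w_\fk m\rho'\rangle > 0$. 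Thus each surviving root vector $e_{\al - \gamma}$ is already one of the factors in $v$, and the Leibniz rule for $\ad e_{-\gamma}$ on the exterior product collapses every summand to zero, completing the proof.
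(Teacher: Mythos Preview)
Your proposal is correct and follows essentially the same route as the paper's proof: the same parameter setup, the same Frobenius reciprocity plus Lemma~\ref{resErho} reduction to $\bigwedgestar\fk n$, the same PRV step producing $F(w_\fk m\rho'+\rho)$, and the identical explicit lowest weight vector $v=\bigwedge e_\al$ verified via the Leibniz rule. The only cosmetic difference is that the paper also records the identity $w_\fk m\rho'+\rho=\sum_{\langle\al,\rho\rangle>0,\ \langle\al,w_\fk m\rho'\rangle>0}\al$ explicitly, which you subsume under ``a direct pairing calculation.''
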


\begin{example}
\label{ex:sphalfrho}
Let $\fk g:=sp(10),$ and take infinitesimal
character $\rho/2$, which is conjugate to
\begin{equation*}
(2,1,5/2,3/2,1/2).
\end{equation*}
According to \cite{B}, the spherical representation is
not unitary, but the parameter
\begin{equation}
  \label{eq:uind8}
  (2,1;1/2,5/2,3/2)\times (-1,-2;-1/2,5/2,3/2)
\end{equation}
which  has $\mu=(3,3,1,0,0)$ and $\nu=(1,-1;0,5,3),$  is unitary
because it is unitarily induced from a representation on $GL(2)\times
Sp(6)$ which is the trivial on $GL(2)$ and the nonspherical component
of the metaplectic representation on $Sp(6)$ (see below).

 The Spin representation is a multiple of $E(\rho)=E(5,4,3,2,1).$ So the multiplicity
 $\big[ L(\la,-s\la)\ :\ E(\rho)\big]$ has a chance to be nonzero since the sums of coordinates
 in $\mu$ and $\rho$ have the same parity. $L(\la,-s\la)$  is
 \textbf{not} unitarily
 induced from a unitary character. The coordinates of $\nu$
 corresponding to the $0'$s in the coordinates of $\mu$ are $5,3$; they
 would have had to have been $4,2.$ Rather, $L(\la,-s\la)$ is unitarily induced from
\eqn
\fk m=\fk m_1\times\fk m_2=gl(2)\times sp(6),
\eeqn
with a character on the $gl(2)$ and one of the
metaplectic representations on $sp(6).$ The parameter of the metaplectic representation is
\begin{equation}
  \label{eq:uind9}
  \la_{\fk m_2}=(1/2,5/2,3/2),\qquad -s\la_{\fk m_2}=(-1/2,5/2,3/2)
\end{equation}
with $\mu_{\fk m_2}=(1,0,0)$ and $\nu_{\fk m_2}=(0,5,3).$ Its $K-$structure is
$(1+2k,0,0).$ Then
\eqn
\tau'_{\fk m_2}=2\la_{\fk m_2}-\rho'_{\fk m_2}=(1,5,3)-(1,3,2)=(0,2,1).
\eeqn
The character of $\fk m_1=gl(2)$ is
$(3,3)$, and we can view it as the character $\xi=(3,3,0,0,0)$ of $\fk m$.

Let us change the parameter in (\ref{eq:uind9}) to
 \begin{equation}
  \label{eq:uind10}
  \la_{\fk m_2}=(5/2,3,2,1/2),\qquad -s\la_{\fk m_2}=(5/2,3/2,-1/2)
\end{equation}
so that $\Delta_{\fk m_2}$ becomes the usual positive system. This changes
$\rho'=(4,2,1,5,3)$ into $(4,2,5,3,1)$. One easily checks that
$\rho_\fk n=(9/2,9/2,0,0,0)$. Thus the last line of (\ref{eq:uind5}) becomes
\begin{equation}
  \label{eq:uind11}
  \big[\bC_{(15/2,15/2,0,0,0)}\otimes F(0,0,1+2k,0,0)\otimes
  F(1/2,-1/2,3,2,1)\ :\ \bigwedgestar\fk n\big].
\end{equation}
The LHS contains the representation with lowest weight conjugate to
\eqn
w_\fk m\rho' +\rho=(2,4,-5,-3,-1)+(5,4,3,2,1)=(7,8,-2,-1,0).
\eeqn
This is the sum of the following roots in $\Delta(\fk n):$
\begin{equation}
  \label{eq:uind12}
  \begin{aligned}
&2\ep_1,\ep_1+\ep_2,\ep_1-\ep_3,\ep_1-\ep_4,\ep_1\pm\ep_5,\\
&2\ep_2,\ep_2-\ep_3,\ep_2\pm\ep_4,\ep_2\pm\ep_5.
  \end{aligned}
\end{equation}
This set of roots is stable under the operation of adding negative simple $\fk m$-roots, \ie,
$-\ep_1+\ep_2,-\ep_3+\ep_4,-\ep_4+\ep_5,-2\ep_5.$ Thus the vector $\bigwedge e_\al$ is
a lowest weight vector for $\Delta_\fk m$. So (\ref{eq:uind11}) is not 0.
\end{example}

\section{Infinitesimal character ${\rho/2}$}\label{sec:rhoovertwo}

This case is the smallest possible in view of the necessary condition (\ref{eq:2.2.3}),
and thus it warrants special attention.
In this case equation (\ref{eq:2.2.3}) becomes
\begin{equation}
  \label{eq:rho/2}
  2(\rho/2)=\tau + \rho,
\end{equation}
so $\tau=0.$ Then the multiplicity in (\ref{eq:2.2.4}) becomes
\begin{equation}
  \label{eq:multrho/2}
  [L(\rho/2,-s\rho/2)\ :\ E(\rho)].
\end{equation}

\subsection
{Induced from a unitary character, infinitesimal character $\rho/2$}
\label{sec:halfrho}

We look at the special case when $\pi$ has infinitesimal character
$\rho/2,$ and is unitarily induced from a unitary character $\xi$ on a Levi
component $\fk m.$ In this case we will be able to improve over the result of \ref{sec:uind}.

Choose a positive system $\Delta$ so that $\xi$
is dominant, and let $\fk p=\fk m +\fk n$ be the parabolic
subalgebra determined by $\xi.$ The representation  $\pi=L(\la,-s\la)$
satisfies
\begin{align}
&\la +s\la=\xi,\quad &&2\la=\xi+2\rho_\fk m,  \label{eq:hrho1}\\
&\la-s\la=2\rho_\fk m,\quad &&2s\la=\xi-2\rho_\fk m.\label{eq:hrho2}
\end{align}
It can be shown that this implies $s=w_\fk m,$ the long Weyl group element in $W(\fk m).$
This fact is however not needed in the following.

Let $\Delta'$ be a positive root system so that $2\lambda$ is dominant. Then
$2\la=\rho'$ and $\tau'=0.$ Thus
$$
\xi=\rho'-2\rho_\fk m,\qquad s\rho'=\rho'-4\rho_\fk m.
$$
Next, the formula (\ref{eq:uind5}) for the case of general $\la$ simplifies to
\begin{equation}
  \label{eq:hrho5}
\begin{aligned}
&\big[\pi:\ E(\rho)\big]=
\big[\bC_\xi :\ E(\rho)|_\fk m\big]=
\big[\bC_\xi:\ F(\rho_\fk m)\otimes
\bC_{-\rho_\fk n}\otimes\bigwedgestar\fk n\big]=\\
&\big[\bC_\xi\otimes F(\rho_\fk m)\otimes
\bC_{\rho_\fk n}\ :\ \bigwedgestar\fk n\big].
\end{aligned}
\end{equation}
The LHS of the last line of (\ref{eq:hrho5}) has highest weight
\begin{equation}
  \label{eq:hrho6}
 \xi+\rho_\fk m +\rho_\fk n=\rho'-2\rho_\fk m +\rho_\fk m +\rho_\fk
n=\rho'+w_\fk m\rho,
\end{equation}
and lowest weight
\begin{equation}
  \label{eq:hrho7}
w_\fk m(\rho'+w_\fk m\rho)=w_\fk m\rho'+\rho.
\end{equation}
We have already shown in Subsection \ref{sec:uind} that (\ref{eq:hrho5}) is nonzero;
  we now show that it is equal to 1, i.e., that the multiplicity of $F(w_\fk m\rho' +\rho)$
   in $\bigwedgestar\fk n$ is equal to 1. We are going to use some classical results of Kostant
   \cite{K1}, \cite{K2} which we describe in the following.

   If $B\subset \Delta,$ denote by
  $2\rho(B)$ the sum of roots in $B.$ In this notation,
  \begin{equation}
    \label{eq:hrho12}
    \rho +w_\fk m\rho'=2\rho(B),
  \end{equation}
where $B:=\big\{ \al\in\Delta (\fk n)\ :\ \langle\rho',\al\rangle >0\big\}$.

\begin{lemma}[Kostant]
\label{Kostantlemma}
Let $B\subset\Delta$ be arbitrary, and denote by $B^c$ the
complement of $B$ in $\Delta.$ Then
\begin{equation*}
  \langle 2\rho(B),2\rho(B^c)\rangle \ge 0,
\end{equation*}
with equality if and only if there is $w\in W$ such that
$2\rho(B)=\rho+w\rho.$ In that case, $B$ is uniquely determined by $w$ as $B=\Delta\cap w\Delta$.
\end{lemma}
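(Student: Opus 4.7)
The plan is to convert the inner product inequality into a norm bound on $2\rho(B)-\rho$ and then to pin down the extremizers by a swap argument on subsets of $\Delta$. Using $2\rho(B)+2\rho(B^c)=2\rho$, a direct expansion gives
\eqn
\langle 2\rho(B), 2\rho(B^c)\rangle \;=\; 2\langle 2\rho(B),\rho\rangle-\|2\rho(B)\|^2 \;=\; \|\rho\|^2-\|2\rho(B)-\rho\|^2,
\eeqn
so the claim is equivalent to $\|2\rho(B)-\rho\|\le\|\rho\|$, with the equality case to be identified. I will maximize $f(B):=\|2\rho(B)-\rho\|^2$ over the finite set $2^\Delta$.

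Let $B^*$ be a maximizer and set $v^*:=2\rho(B^*)-\rho$. Swapping any $\alpha\in\Delta$ in or out of $B^*$ shifts $v^*$ by $\pm\alpha$, and non-increase of $f$ forces $\langle\alpha,v^*\rangle\ge\|\alpha\|^2/2$ when $\alpha\in B^*$ and $\langle\alpha,v^*\rangle\le -\|\alpha\|^2/2$ when $\alpha\in(B^*)^c$. Since $\|\alpha\|^2/2>0$, this yields $B^*=\{\alpha\in\Delta : \langle\alpha,v^*\rangle>0\}$ and shows that $v^*$ is regular (no root is orthogonal to $v^*$).

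For any regular $v$, the signed set $\Delta_v:=\{\operatorname{sign}(\langle\alpha,v\rangle)\alpha : \alpha\in\Delta\}$ is the positive root system whose open Weyl chamber contains $v$. Since $W$ acts simply transitively on chambers, $\Delta_v=w\Delta$ for a unique $w=w(v)\in W$, and therefore $\sum_{\beta\in\Delta_v}\beta=2w\rho$. On the other hand, this sum equals $\sum_{\alpha\in\Delta}\operatorname{sign}(\langle\alpha,v\rangle)\alpha=2\rho(B_v)-2\rho(B_v^c)=2(2\rho(B_v)-\rho)$, where $B_v=\{\alpha\in\Delta : \langle\alpha,v\rangle>0\}$. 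Applied to $v=v^*$ (for which $B_{v^*}=B^*$), this gives $v^*=w\rho$ and hence $\|v^*\|=\|\rho\|$, establishing $\langle 2\rho(B),2\rho(B^c)\rangle\ge 0$ for every $B$.

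For the equality case, any $B$ with $f(B)=\|\rho\|^2$ is a maximizer, so the same analysis gives $2\rho(B)=\rho+w\rho$ for a unique $w$ (uniqueness because $\rho$ has trivial $W$-stabilizer), and $B=\{\alpha\in\Delta : \langle\alpha,w\rho\rangle>0\}=\Delta\cap w\Delta$ (using that $\rho$ is strictly dominant, so $\langle\alpha,w\rho\rangle>0\iff w^{-1}\alpha\in\Delta$). Conversely, if $B=\Delta\cap w\Delta$, then splitting $w(2\rho)=\sum_{\beta\in\Delta}w\beta$ according to the sign of $w\beta$ identifies it with $2\rho(B)-2\rho(B^c)$, whence $2\rho(B)=\rho+w\rho$. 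The main conceptual step is the geometric identification in the third paragraph: recognizing the signed sum $\sum_\alpha\operatorname{sign}(\langle\alpha,v\rangle)\alpha$ as $2w\rho$ via the chamber of $v$; everything else is routine bookkeeping with the swap/optimality conditions.
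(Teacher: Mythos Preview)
Your proof is correct, and it takes a genuinely different route from the paper's. Both arguments begin with the same identity $\langle 2\rho(B),2\rho(B^c)\rangle=\|\rho\|^2-\|2\rho(B)-\rho\|^2$, but the paper then invokes representation theory: $\rho-2\rho(B)$ is a weight of the irreducible module $E(\rho)$, so its norm is at most $\|\rho\|$, with equality exactly at the extremal weights $w\rho$. You instead run an elementary optimization: the swap inequalities at a maximizer force $v^*=2\rho(B^*)-\rho$ to be regular and force $B^*$ to be the set of positive roots on which $v^*$ is positive; chamber geometry then identifies $v^*$ with some $w\rho$. Your approach is more self-contained (no appeal to weights of $E(\rho)$) and, notably, it proves the uniqueness clause $B=\Delta\cap w\Delta$ directly from the optimality conditions, whereas the paper only verifies that $\Delta\cap w\Delta$ has the correct sum $\rho+w\rho$ and leaves the uniqueness of $B$ implicit in the Kostant references. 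The paper's argument is shorter if one is willing to import the representation-theoretic fact, but yours stands on its own.
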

\begin{proof}
Since $2\rho(B)+2\rho(B^c)=2\rho$, we have
\begin{equation}
  \label{eq:hrho13}
  \langle 2\rho(B),2\rho(B^c)\rangle=\langle 2\rho(B),2\rho-2\rho(B)\rangle =
\langle\rho,\rho\rangle -\langle\rho-2\rho(B),\rho-2\rho(B)\rangle.
\end{equation}
But $\rho-2\rho(B)$ is a weight of $E(\rho),$ so the expression in
(\ref{eq:hrho13}) is indeed $\ge 0.$ It is equal to $0$ precisely when
$\rho-2\rho(B)$ is an extremal weight of $E(\rho)$. In that case it is conjugate to the
lowest weight $-\rho$, i.e., there is $w\in
W$ such that $\rho-2\rho(B)=-w\rho.$

For the last statement, notice that $\Delta=(\Delta\cap w\Delta)\cup (\Delta\cap -w\Delta)$, and that consequently
$\rho+w\rho=2\rho(\Delta\cap w\Delta)$, since the elements of $\Delta\cap -w\Delta$ cancel out in the sum $\rho+w\rho$.
\end{proof}

\begin{corollary}
  The weight $\rho +w_\fk m\rho'$ occurs with multiplicity 1 in
  $\bigwedgestar\fk n.$
\end{corollary}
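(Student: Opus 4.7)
The plan is to deduce the corollary directly from the uniqueness clause of Kostant's Lemma \ref{Kostantlemma}.

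First, I would observe that $\bigwedgestar\fk n$ has a natural $\fk h$-weight basis given by the wedge products $\bigwedge_{\al\in S}e_\al$ as $S$ ranges over subsets of $\Delta(\fk n)$, and the weight of such a basis vector is $2\rho(S)$. Thus the multiplicity of $\rho+w_\fk m\rho'$ in $\bigwedgestar\fk n$ equals the number of subsets $S\subseteq\Delta(\fk n)$ with $2\rho(S)=\rho+w_\fk m\rho'$, and the task reduces to showing this count is exactly one.

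Second, I would write $\rho'=w_0\rho$ for the unique $w_0\in W$ carrying the positive system $\Delta$ to $\Delta'$ (which exists and is unique since $\rho$ is regular and $W$ acts simply transitively on positive systems). Setting $w=w_\fk m w_0\in W$, the target weight becomes $\rho+w\rho$. Since $\Delta(\fk n)\subseteq\Delta$, any candidate $S$ is in particular a subset of $\Delta$, and Kostant's lemma applies: its uniqueness clause forces $S=\Delta\cap w\Delta$, a set determined purely by $w$. The subset $B$ of (\ref{eq:hrho12}) already satisfies $2\rho(B)=\rho+w\rho$, so $B$ must coincide with this unique $\Delta\cap w\Delta$, and no other $S\subseteq\Delta(\fk n)$ can realize the same weight. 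The multiplicity is therefore exactly $1$.

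I do not anticipate a substantive obstacle: the heavy lifting has already been done in proving Lemma \ref{Kostantlemma}. If one prefers an argument that bypasses the Weyl-group identification entirely, a direct positivity computation also closes the case: if $S\subseteq\Delta(\fk n)$ satisfies $2\rho(S)=2\rho(B)$, then $\sum_{\al\in S\setminus B}\al=\sum_{\al\in B\setminus S}\al$, and pairing with the regular vector $\rho'$ makes the left side nonpositive (each root of $\Delta(\fk n)\setminus B$ pairs negatively with $\rho'$) while the right side is nonnegative; the regularity of $\rho'$ then forces both sides to vanish, which can happen only when $S\setminus B=B\setminus S=\emptyset$, i.e., $S=B$.
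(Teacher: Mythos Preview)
Your main argument is correct and is essentially the paper's own proof: the paper also writes $w_\fk m\rho'=x\rho$ for a unique $x\in W$ and invokes the uniqueness clause of Lemma \ref{Kostantlemma} to conclude that the set $B$ of (\ref{eq:hrho12}) is the only subset of $\Delta$ (hence of $\Delta(\fk n)$) with the required weight sum.

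Your alternative positivity argument is a genuinely different and more elementary route. It bypasses Kostant's lemma entirely by exploiting the regularity of $\rho'$ directly: since $B$ is defined by the sign of $\langle\rho',\cdot\rangle$ on $\Delta(\fk n)$ and $\rho'$ is regular, pairing the equality $\sum_{\al\in S\setminus B}\al=\sum_{\al\in B\setminus S}\al$ with $\rho'$ forces both symmetric differences to be empty. This is a pleasant self-contained argument; the paper's approach has the advantage of situating the result within Kostant's classical framework, while yours makes clear that nothing beyond the definition of $B$ and the regularity of $\rho'$ is needed.
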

\begin{proof}
We can write $w_\fk m\rho'=x\rho$ for a unique $x\in W.$ By the last statement of Lemma \ref{Kostantlemma}, it follows that the set
$B$ from (\ref{eq:hrho12}) is uniquely determined, and hence the corresponding multiplicity is one.
\end{proof}
We have proved

\begin{theorem}
\label{thm:halfrho}
Let $P=MN$ ba a parabolic subalgebra of $G$, and let $\Delta$ be the set
of positive roots corresponding to $P$. Assume that $\pi$ is a representation of $G$ with
infinitesimal character $\rho/2$, which is unitarily induced from a character $\xi$ of $M$,
such that $\xi$ is dominant with respect to $\Delta$. Then the Dirac cohomology of $\pi$ consists
of the trivial $K$-module with multiplicity $[Spin:E(\rho)]$.
\end{theorem}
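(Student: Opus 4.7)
The plan is to reduce the statement to a tensor-product multiplicity computation in $\bigwedgestar \fk n$, and then invoke Kostant's Lemma \ref{Kostantlemma} to pin that multiplicity down to $1$. By the general framework of Section \ref{sec:2.2}, a unitary $L(\la,-s\la)$ has $H_D \ne 0$ only when there is a $\Kt$-type $E(\tau)$ in $\pi \otimes E(\rho)$ with $2\la = \tau + \rho$. At infinitesimal character $\rho/2$, equation (\ref{eq:rho/2}) forces $\tau = 0$, so $H_D(\pi)$ can only contain trivial $\Kt$-types; by (\ref{eq:2.2.2}) together with Lemma \ref{spin}, the total multiplicity of the trivial type equals $[Spin:E(\rho)] \cdot [\pi:E(\rho)]$.

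Next I would compute $[\pi:E(\rho)]$. Frobenius reciprocity for unitary induction from $P=MN$ gives $[\pi:E(\rho)] = [\bC_\xi:E(\rho)|_\fk m]$, and Lemma \ref{resErho} rewrites this as the final line of (\ref{eq:hrho5}), namely
\[
\big[\bC_\xi \otimes F(\rho_\fk m) \otimes \bC_{\rho_\fk n}\ :\ \bigwedgestar\fk n\big].
\]
The $\fk m$-module $\bC_\xi \otimes F(\rho_\fk m) \otimes \bC_{\rho_\fk n}$ is irreducible, with highest weight $\rho' + w_\fk m \rho$ as in (\ref{eq:hrho6}) and lowest weight $w_\fk m \rho' + \rho$ as in (\ref{eq:hrho7}); the argument of Subsection \ref{sec:uind} already shows this multiplicity is at least one.

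It then remains to pin the multiplicity down to exactly $1$, which is where Kostant's lemma enters. Writing $w_\fk m \rho' = x\rho$ for some $x \in W$, one has $\rho + w_\fk m \rho' = 2\rho(B)$ for $B = \Delta \cap x\Delta$; by the uniqueness clause of Lemma \ref{Kostantlemma}, this is the only subset of $\Delta$ summing to $\rho + w_\fk m \rho'$, so that weight occurs with multiplicity exactly $1$ in $\bigwedgestar \fk n$. Since the irreducible $\fk m$-module above already contributes one copy of this extremal weight, no further copies of it can fit inside $\bigwedgestar \fk n$, giving $[\pi:E(\rho)] = 1$.

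The hard part is the uniqueness half of Kostant's lemma: one must verify that from $\rho - 2\rho(B)$ being an extremal weight of $E(\rho)$, together with $B \subset \Delta$, the subset $B$ is rigidly recovered as $\Delta \cap w\Delta$ for the $w \in W$ with $\rho - 2\rho(B) = -w\rho$ (the cancellation $\rho + w\rho = 2\rho(\Delta \cap w\Delta)$ over $\Delta \cap -w\Delta$ is the mechanism). Once that rigidity is in hand, the three reductions above assemble directly into $H_D(\pi) = [Spin:E(\rho)] \cdot \bC_{\mathrm{triv}}$, as claimed.
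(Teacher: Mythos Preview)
Your proposal is correct and follows essentially the same route as the paper: reduce to $\tau=0$ via (\ref{eq:rho/2}), use Frobenius reciprocity and Lemma~\ref{resErho} to rewrite $[\pi:E(\rho)]$ as the multiplicity of the irreducible $\fk m$-module with extremal weight $w_\fk m\rho'+\rho$ in $\bigwedgestar\fk n$, and then invoke the uniqueness clause of Kostant's Lemma~\ref{Kostantlemma} to see that this weight occurs exactly once, forcing the module multiplicity to be $1$. Your explicit remark that the single weight occurrence caps the module multiplicity at $1$ (combined with the lower bound from Subsection~\ref{sec:uind}) is precisely the implicit step behind the paper's Corollary.
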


We give some applications of this theorem in the next
  sections. In particular we collect evidence for the following
  conjecture.

  \begin{conjecture}\label{conj:3.4}
    A unitary representation with infinitesimal character $\rho/2$ has
    Dirac cohomology consisting of the trivial $K-$type with
    multiplicity $[Spin:E(\rho)]$ or 0.
  \end{conjecture}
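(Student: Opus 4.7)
The plan is to reduce Conjecture \ref{conj:3.4} to a bound on the multiplicity of a single $K$-type in $\pi$, and then to address that bound via the classification of unitary representations at infinitesimal character $\rho/2$.

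First, suppose $\pi$ is unitary with infinitesimal character $\rho/2$. By Theorem \ref{t:basic} combined with (\ref{eq:2.2.3}), any $\Kt$-type $E(\tau)$ appearing in $H_D(\pi)$ satisfies $2(\rho/2)=\tau+\rho$, forcing $\tau=0$; thus $H_D(\pi)$ is a multiple of the trivial $K$-type. Using Lemma \ref{spin} to replace $Spin$ by $[Spin:E(\rho)]$ copies of $E(\rho)$, together with the fact that the longest element $w_\fk g$ of $W(\fk g,\fk h)$ satisfies $w_\fk g\rho=-\rho$ (so $E(\rho)^*\cong E(\rho)$), we obtain $[E(\mu)\otimes E(\rho):E(0)]=\delta_{\mu,\rho}$. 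Substituting into (\ref{eq:2.2.2}) yields
\eqn
[H_D(\pi):E(0)]=[Spin:E(\rho)]\cdot[\pi:E(\rho)].
\eeqn
Hence Conjecture \ref{conj:3.4} is equivalent to the statement $[\pi:E(\rho)]\in\{0,1\}$ for every unitary $\pi$ with infinitesimal character $\rho/2$.

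I would then attack this multiplicity bound by splitting the unitary dual at infinitesimal character $\rho/2$ into (a) representations unitarily induced from a unitary character of a proper Levi, and (b) ``unipotent'' unitary representations not of this form. For case (a), Theorem \ref{thm:halfrho} directly gives $[\pi:E(\rho)]=1$. For case (b), one would invoke the classification of unitary representations of complex reductive groups at small infinitesimal character (due in large part to Barbasch, with Barbasch--Vogan on the unipotent side), realize each candidate explicitly (e.g.\ via a theta correspondence, or as a composition factor of a degenerate principal series attached to a maximal parabolic), and compute $[\pi:E(\rho)]$ directly. A more conceptual route would be to show that whenever $E(\rho)$ appears in such a unitary $\pi$ it does so as a minimal $K$-type in the sense of Vogan, hence automatically with multiplicity one.

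The hard step is case (b): in a generic standard module, $E(\rho)$ can appear with arbitrarily large multiplicity, so the desired bound of one must be extracted from unitarity together with the rigidity imposed by the specific infinitesimal character $\rho/2$. A plausible uniform argument would identify $[\pi:E(\rho)]$ with the multiplicity of a Springer representation on the associated variety of $\pi$, and then exploit the geometry of the relevant distinguished nilpotent orbit; I expect this geometric translation to be the most delicate ingredient. Short of such a conceptual argument one is reduced to case-by-case verification for the classical groups, which is feasible but would not illuminate the uniform structure the conjecture suggests.
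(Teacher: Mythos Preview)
The statement you are addressing is a \emph{conjecture} in the paper, and the paper does not offer a proof. What follows Conjecture~\ref{conj:3.4} is explicitly framed as evidence: in types $A$ and $B$ every unitary representation at infinitesimal character $\rho/2$ is unitarily induced from a character, so Theorem~\ref{thm:halfrho} settles the matter; in types $C$ and $D$ the paper only reduces to unipotent building blocks on a Levi factor and defers their analysis to Section~\ref{unip}; and for $F_4$ the paper tabulates all hermitian parameters by computer and checks the multiplicity directly.

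Your reduction of the conjecture to the bound $[\pi:E(\rho)]\in\{0,1\}$ is correct and is exactly the content of (\ref{eq:multrho/2}); this is the paper's own framing. Your case~(a) is then precisely Theorem~\ref{thm:halfrho}. Where your proposal goes beyond the paper is case~(b), and you rightly flag it as the hard, open step. One concrete gap in your plan: the dichotomy (a)/(b) is not exhaustive. Already in type $C$ (see Subsection~3.4 and Example~\ref{ex:sphalfrho}) there are unitary representations at $\rho/2$ that are unitarily induced from a \emph{non-character} unipotent representation of a proper Levi (a metaplectic factor on $Sp(2k)$ tensored with characters on the $GL$ factors); these are neither induced from a character nor unipotent on $G$ itself. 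Theorem~\ref{thm:uind} gives nonvanishing of $H_D$ for such $\pi$, but the multiplicity-one statement is not supplied in general, and your proposed route via Vogan-minimality of $E(\rho)$ would need to be made precise here as well as in the genuinely unipotent case.
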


This conjecture sharpens the main conjecture in the introduction for
the special case of infinitesimal character $\rho/2$, in
the sense that it predicts the size of $H_D(\pi)$ precisely in case when it is nonzero.

\subsection{Type A}
In this case,
\begin{equation}
  \label{eq:Arho}
\la= (\frac{n-1}{4},\dots ,\frac{-n+1}{4}).
\end{equation}
By the classification of unitary representations from \cite{V1}, the irreducible unitary
representations with infinitesimal character $\rho/2$ are all unitarily induced
irreducible from unitary characters on Levi components.
Therefore, this case is covered by Theorem \ref{thm:halfrho}.
It follows that any irreducible unitary representation $\pi$ with infinitesimal
  character $\rho/2$ has Dirac cohomology consisting of
the trivial $K-$type occuring with multiplicity $[Spin:E(\rho)]$.

\subsection{Type B}

In this case
\begin{equation}
  \label{eq:Brho}
  \la= (\frac{2n-1}{4},\dots ,\frac14).
\end{equation}
The coroots integral on $\la$ form a subsystem of type $A(n).$ In the
notation of Section \ref{unip},  $\vg(\la)\cong A(n)$.
According to \cite{B}, the unitary representations are all unitarily
induced irreducible from unitary characters on Levi
components. Thus Theorem \ref{thm:halfrho} applies. It follows that any
irreducible unitary representation $\pi$ with infinitesimal
character $\rho/2$ has Dirac cohomology consisting of
the trivial $K-$type occuring with multiplicity $[Spin:E(\rho)]$.

\subsection{Type C}
In this case
\begin{equation}
  \label{eq:Crho}
  \la= (\frac{n}{2},\dots ,\frac12),
\end{equation}
and in the notation of Section 5, $\vg(\la)\cong B(\left[\frac{n}{2}\right])\times
D(\left[\frac{n+1}{2}\right])$ or $B(\left[\frac{n+1}{2}\right])\times
D(\left[\frac{n}{2}\right])$ depending on the parity of $n.$

According to \cite{B}, any irreducible unitary representation must be unitarily induced irreducible from
unitary characters on factors of type A of a Levi component, and
trivial or metaplectic representation on the factor of type C.
The latter case is not covered by Theorem \ref{thm:halfrho}. This situation was illustrated in Example
\ref{ex:sphalfrho}. 

The metaplectic representations will be analyzed in Section \ref{unip}. In particular, we will see
that depending on the parity of $n$, exactly one of the two metaplectic representations has nonzero Dirac cohomology. 

Our conjectures predict that the Dirac cohomology of an irreducible unitary representation $\pi$ with infinitesimal
character $\rho/2$ is the trivial $K-$type occuring with multiplicity $[Spin:E(\rho)]$, in case the representation that 
$\pi$ is induced from
has the metaplectic representation of appropriate parity, or the trivial representation on the factor of type C of the Levi component. Otherwise, $H_D$ has to
be zero.

\subsection{Type D}
In this case
\begin{equation}
  \label{eq:Drho}
  \la= (\frac{n-1}{2},\dots ,\frac12,0),
\end{equation}
and in the notation of Section 5, $\vg(\la)\cong D(\left[\frac{n+1}{2}\right])\times
D(\left[\frac{n}{2}\right])$.
According to \cite{B}, the unitary representations are all unitarily induced irreducible from
unitary characters on the factors of a Levi component of type A, and
a unipotent representation of the factor of the Levi component of
type D.

Our conjectures predict that such a representation $\pi$ has nonzero Dirac cohomology precisely when
the corresponding unipotent representation has nonzero Dirac cohomology and that in this
case $H_D(\pi)$ consists of the trivial $K-$type occuring with multiplicity $[Spin:E(\rho)]$.
We describe the unipotent representations with nonzero Dirac cohomology in Section \ref{unip}.

\subsection{Type F}
We investigate Conjecture \ref{conj:3.4}. The calculations were performed using LiE.
We list the hermitian parameters in the case of infinitesimal
character $\rho/2$ in the case of $F_4.$ The simple roots, coroots and
weights are

\begin{align}
&\begin{matrix}
    (0,1,-1,0)&-&(0,0,1,-1)&=>=&(0,0,0,1)&-&(1/2,-1/2,-1/2,-1/2).
  \end{matrix}  \label{eq:f4sr}\\
&\begin{matrix}
    (0,1,-1,0)&-&(0,0,1,-1)&=<=&(0,0,0,2)&-&(1,-1,-1,-1),
  \end{matrix}  \label{eq:f4cr}\\
&\begin{matrix}
    (1,1,0,0)&-&(2,1,1,0)&=>=&(3/2,1/2,1/2,1/2)&-&(1,0,0,0).
  \end{matrix}
  \label{eq:f4wt}
\end{align}
In these coordinates, $\rho/2=(5/2,3/2,1,1/2).$ The hermitian parameters
are in the following list. The $K-$type $\mu'$ indicates a $K-$type
which has signature opposite to that of the lowest $K-$type $\mu.$
For the parameters where the coordinates are $(\dots,1,\dots)\times
(\dots,-1,\dots),$ the Langlands quotients are unitarily induced
irreducible from the remainder of the parameter on a $B_3$; so the
representation is unitary if and only if the one with remainder on
$B_3$ is unitary. So we did not list a $\mu'$ which detects the
nonunitarity.
It is visible from the table that all irreducible unitary
representations with infinitesimal character $\rho/2$ have Dirac cohomology
consisting of the trivial $K-$type occuring with multiplicity $[Spin:E(\rho)]$.
Namely, each of these representations has $K-$type $E(\rho)$ with
multiplicity one. All unitary representations are unitarily induced
from unipotent representations tensored with unitary characters. The
results conform to Conjecture \ref{conj:3.4}.

\newpage
\begin{tabular}{|ll|l|l|l|l|l|}
$(5/2,3/2,1,1/2)\times$&$\mu$ & $\nu$ &Unitary& $E(\rho)$ & $\mu'$\\
&&&&&\\
$(5/2,3/2,1,1/2)$&$(0,0,0,0)$&$(5,3,2,1)$&NO&&$(1,0,0,0)$\\
$(5/2,3/2,1,-1/2)$&$(1,0,0,0)$&$(0,5,3,2)$&NO&&$(1,1,0,0)$\\
$(5/2,-3/2,1,1/2)$&$(3,0,0,0)$&$(0,5,2,1)$&NO&&$(3,1,0,0)$\\
$(-5/2,3/2,1,1/2)$&$(5,0,0,0)$&$(0,3,2,1)$&YES&1&\\
$(5/2,-3/2,1,-1/2)$&$(3,1,0,0)$&$(0,0,5,2)$&NO&&$(3,1,1,1)$\\
$(-5/2,3/2,1,-1/2)$&$(5,1,0,0)$&$(0,0,3,2)$&NO&&$(5,1,1,1)$\\
$(-5/2,-3/2,1,1/2)$&$(5,3,0,0)$&$(0,0,2,1)$&YES&1&\\
$(-5/2,-3/2,1,-1/2)$&$(5,3,1,0)$&$(0,0,0,2)$&NO&&$(5,3,1,1)$\\
$(5/2,1/2,1,3/2)$&$(1,1,0,0)$&$(2,-2,2,5)$&NO&&$(2,0,0,0)$\\
$(-5/2,1/2,1,3/2)$&$(5,1,1,0)$&$(0,2,-2,2)$&NO&&$(5,2,0,0)$\\
$(3/2,5/2,1,1/2)$&$(1,1,0,0)$&$(4,-4,2,1)$&NO&&$(1,0,0,0)$\\
$(3/2,5/2,1,-1/2)$&$(1,1,1,0)$&$(4,-4,0,2)$&NO&&$(2,0,0,0)$\\
$(1/2,3/2,1,5/2)$&$(2,2,0,0)$&$(3,-3,2,3)$&NO&&$(3,1,0,0)$\\
$(1/2,-3/2,1,5/2)$&$(3,2,2,0)$&$(0,3,-3,2)$&NO&&$(7/2,5/2,1/2,1/2)$\\
$(5/2,-1/2,1,-3/2)$&$(2,2,0,0)$&$(1,-1,5,2)$&NO&&$(3,1,0,0)$\\
$(-5/2,-1/2,1,-3/2)$&$(5,2,2,0)$&$(0,1,-1,2)$&NO&&$(5,2,2,1)$\\
$(-3/2,-5/2,1,1/2)$&$(4,4,0,0)$&$(1,-1,2,1)$&NO&&$(9/2,7/2,1/2,1/2)$\\
$(-3/2,-5/2,1,-1/2)$&$(4,4,1,0)$&$(1,-1,1,2)$&NO&&$(4,4,1,1)$\\
$(-1/2,-3/2,1,5/2)$&$(3,3,0,0)$&$(2,-2,3,2)$&NO&&$(4,2,0,0)$\\
$(-1/2,-3/2,1,-5/2)$&$(3,3,3,0)$&$(2,0,-2,2)$&NO&&$(9/2,5/2,3/2,1/2)$\\
&&&&&\\
$(5/2,3/2,-1,1/2)$ &$(2,0,0,0)$&$(0,5,3,1)$&YES&1&\\
$(5/2,3/2,-1,-1/2)$&$(2,1,0,0)$&$(0,0,5,3)$&NO&&\\
$(5/2,-3/2,-1,1/2)$&$(3,2,0,0)$&$(0,0,5,1)$&NO&&\\
$(-5/2,3/2,-1,1/2)$&$(5,2,0,0)$&$(0,0,3,1)$&YES&1&\\
$(5/2,-3/2,-1,1/2)$&$(3,2,0,0)$&$(0,0,5,1)$&NO&\\
$(-5/2,3/2,-1,-1/2)$&$(5,2,1,0)$&$(0,0,0,3)$&NO&&\\
$(-5/2,-3/2,-1,1/2)$&$(5,3,2,0)$&$(0,0,0,1)$&YES&1&\\
$(-5/2,-3/2,-1,-1/2)$&$(5,3,2,1)$&$(0,0,0,0)$&YES&1&\\
$(5/2,1/2,-1,3/2)$&$(2,1,1,0)$&$(0,2,-2,5)$&NO&&\\
$(-5/2,1/2,-1,3/2)$&$(5,2,1,1)$&$(0,0,2,-2)$&NO&&\\
$(3/2,5/2,-1,1/2)$&$(2,1,1,0)$&$(0,4,-4,1)$&NO&&\\
$(3/2,5/2,-1,-1/2)$&$(2,1,1,1)$&$(0,4,0,-4)$&NO&&\\
$(1/2,3/2,-1,5/2)$&$(2,2,2,0)$&$(0,3,-3,3)$&NO&&\\
$(1/2,-3/2,-1,5/2)$&$(3,2,2,2)$&$(0,3,-3,0)$&NO&&\\
$(5/2,-1/2,-1,-3/2)$&$(2,2,2,0)$&$(1,0,-1,5)$&NO&&\\
$(-5/2,-1/2,-1,-3/2)$&$(5,2,2,2)$&$(0,1,0,-1)$&YES&1&\\
$(-3/2,-5/2,-1,1/2)$&$(4,4,2,0)$&$(1,-1,0,1)$&YES&1&\\
$(-3/2,-5/2,-1,-1/2)$&$(4,4,1,0)$&$(1,-1,1,2)$&YES&1&\\
$(-1/2,-3/2,-1,5/2)$&$(3,3,2,0)$&$(2,-2,0,3)$&NO&&\\
$(-1/2,-3/2,-1,-5/2)$&$(3,3,3,2)$&$(2,0,-2,0)$&YES&1&
\end{tabular}

\newpage


\section{The case of $GL(n,\bC)$}

The coordinates are the usual ones. The unitary dual of
$GL(n,\bC)$ is known by the results of Vogan \cite{V1}. A representation is unitary if and
only if it is a \textit{Stein complementary series} from a
representation induced from a unitary character on a Levi
component. In order for $\pi$ to have Dirac cohomology, $2\la$ must be
integral. Therefore $\pi$ must be unitarily induced
from a unitary character. 

We note that a unitary character $\bC_\xi$ always has Dirac cohomology equal to $\bC_\xi\otimes Spin$.
Namely, $D=0$ on $\bC_\xi\otimes Spin$. 

In this section we present evidence for the following conjecture. 

\begin{conjecture}\label{conj:gl}
Let $\pi=L(\la,-s\la)$ be an irreducible unitary representation of
$GL(n,\bC)$, such that $2\la$ is regular and integral. Let $\Delta$ be
the positive root system such that $\la$ is dominant, and let $\rho$
be the corresponding half sum of the positive roots. 
Then the Dirac cohomology of $\pi$ is the $K-$type $E(2\la-\rho)$,
with multiplicity $[Spin:E(\rho)]$. 
\end{conjecture}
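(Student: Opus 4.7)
The plan is to extend the argument for Theorem \ref{thm:halfrho} from infinitesimal character $\rho/2$ to general regular integral $2\la$, using the fact that for $GL(n,\bC)$ every irreducible unitary representation with $2\la$ integral is already unitarily induced from a unitary character. By \cite{V1}, the hypothesis forces $\pi = \Ind_P^G [\bC_\xi]$ for some parabolic $P = MN$ and unitary character $\xi$ of $M$; the Stein complementary series parameters are ruled out by integrality of $2\la$. Choose $\Delta$ so that $\la$ is dominant and $\xi$ is dominant with respect to $\Delta_\fk m$; then the parameter computation from Section \ref{sec:uind} yields $2\la = \xi + 2\rho_\fk m$, so $\tau := 2\la - \rho = \xi + \rho_\fk m - \rho_\fk n$.

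By Theorem \ref{t:basic} together with the regularity of $2\la$, every $\Kt$-type occurring in $H_D(\pi)$ has highest weight $\tau$, so $H_D(\pi) \cong E(\tau)^{\oplus d}$ for some integer $d$. By Lemma \ref{spin}, $d = [Spin : E(\rho)] \cdot [\pi \otimes E(\rho) : E(\tau)]$, which reduces the conjecture to showing that the last factor equals $1$. Frobenius reciprocity combined with Lemma \ref{resErho}, exactly as in the derivation of (\ref{eq:uind5}), rewrites this multiplicity as
\[
[\bC_{\xi + \rho_\fk n} \otimes F(\rho_\fk m) \otimes E(-\tau)|_\fk m \,:\, \bigwedgestar \fk n].
\]
The PRV-component analysis from Section \ref{sec:uind} locates an $\fk m$-summand of the left-hand side of the shape $F(w_\fk m \rho' + \rho)$, where $\rho' = 2\la$. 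Kostant's Lemma \ref{Kostantlemma} and its Corollary then force the multiplicity of $F(w_\fk m \rho' + \rho)$ in $\bigwedgestar \fk n$ to be exactly one: regularity of $\rho'$ means that the element $w \in W$ with $w_\fk m \rho' = w\rho$ is unique, and hence the subset $B = \Delta(\fk n) \cap w\Delta$ realizing $\rho + w\rho = 2\rho(B)$ is unique.

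The main obstacle is showing that the multiplicity equals $1$ rather than merely being $\ge 1$. For general $\la$, the module $E(-\tau)|_\fk m$ decomposes into many irreducible $\fk m$-summands besides the distinguished PRV component, and after tensoring with $\bC_{\xi+\rho_\fk n} \otimes F(\rho_\fk m)$ one must verify that none of the other summands produce extra contributions to $\bigwedgestar \fk n$. The Dirac inequality (\ref{dirineq}) combined with Kostant's uniqueness should supply the needed bound: any contribution must correspond to a decomposition of $\rho + w\rho$ as a sum of roots in $\Delta(\fk n)$, and such decompositions are uniquely controlled by $w$. Still, a careful bookkeeping of extremal weights across the branching of $E(-\tau)|_\fk m$ is needed to rule out spurious PRV pieces and establish the sharp upper bound; this is the step that is routine in the $\rho/2$ case (where $E(-\tau)=\bC$) but requires real work for arbitrary $\la$.
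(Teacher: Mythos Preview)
The statement is a \emph{conjecture}; the paper does not prove it in general. What the paper actually establishes is the special case where $\pi$ is induced from a unitary character of a \emph{maximal} parabolic $P=MN$ with $M=GL(a)\times GL(b)$, and the argument there is entirely different from yours. Rather than pushing the Frobenius--Kostant machinery of Sections~\ref{sec:uind}--\ref{sec:halfrho}, the paper writes down the $K$-type structure of $\pi$ explicitly (Proposition~\ref{prop:glKtypes}: the $K$-types are multiplicity free and indexed by partitions $(x_1\ge\cdots\ge x_b\ge 0)$), computes $\tau=2\la-\rho$ and $\mu-\rho$ in coordinates, and then shows by direct inspection that exactly one $\mu$ has $\mu-\rho$ conjugate to $\tau$ under $W$. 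The upper bound thus comes from the multiplicity-one $K$-structure and an elementary coordinate comparison, not from Kostant's lemma.

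Your proposal has a genuine gap, which you yourself flag: the Kostant argument only shows that one particular $\fk m$-constituent $F(w_\fk m\rho'+\rho)$ appears once in $\bigwedgestar\fk n$, but for $\tau\ne 0$ the module $E(-\tau)|_\fk m$ has many other summands, and nothing in your outline controls their contributions. The appeal to ``Dirac inequality plus Kostant uniqueness'' is not an argument---the Dirac inequality already fixed $\tau$, and Kostant's lemma bounds a single weight multiplicity, not the full branching. Moreover, your identification ``$\rho'=2\la$'' is only valid in the $\rho/2$ case; in general $2\la=\tau'+\rho'$ with $\tau'\ne 0$, so the PRV component you extract is not the one you name. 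The paper's coordinate approach sidesteps all of this by working directly with the known $K$-spectrum of $\pi$, which is why it succeeds for two blocks but, as the authors note, ``gets increasingly complicated'' beyond that.
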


In the next subsection we prove this conjecture in case $\pi$ is induced
from a unitary character of a maximal parabolic subgroup. We have also 
verified the conjecture in some other cases but we do not present
them because the notation and arguments get increasingly
complicated.
\subsection{Maximal parabolic case}
\label{maxpar}
 Let $G=GL(n,\mathbb{C})$, and let $P=MN$ be a parabolic
  subgroup such that $M=GL(a)\times GL(b)$. We consider the case when
  $\pi$ is induced from a unitary character of $M.$ Then $\la$ is
  formed of integers or half integers. Conjugating
    $2\la$ to be dominant with respect to the usual positive form, we
  can write it as
  \begin{equation}
    \label{eq:glmax1}
2\la=\big(\al+2k,\dots ,\al+2,\al,\al-1,\dots
,\beta+1,\beta,\beta-2,\dots,\beta-2l\big),
  \end{equation}
where $\alpha$ and $\beta$ are integers of opposite
  parity, and $k\ge 0.$
The module is induced from a unitary character on a Levi component
$GL(a)\times GL(b)\subset GL(n=a+b)$. 
Then $a, b$ and the unitary character $\xi=(\xi_1,\xi_2)$ are
\begin{equation}
  \label{eq:glmax2}
  \begin{aligned}
    &a=\frac{\al -\beta+1}{2}+k,&\quad b=\frac{\al-\beta+1}{2}+l,\\
    &\xi_1=\frac{\al+\beta+1}{2}+k,&\quad \xi_2=\frac{\al+\beta-1}{2}-l.
  \end{aligned}
\end{equation}
The case $l<0$ is similar to $l\ge 0,$ so for simplicity of exposition
we treat $l\ge 0$ only.

The condition for $\xi$ to be dominant for the standard positive
system ($\la$ is dominant for it) is that $k+l+1\ge 0.$ By changing
$\la$ to $-\la$ and conjugating to make it dominant, we assume this to
be the case. Assume that $a\ge b$ \ie $k\ge l,$ the other case is similar.

\begin{proposition}
\label{prop:glKtypes}
The $K-$structure of $\pi:=\Ind_P^G [\xi]$ is formed of
\begin{equation*}
  \label{eq:glmax3}
(\xi_1+x_1,\dots ,\xi_1+x_b,\xi_1,\dots ,\xi_1,\xi_2-x_b,\dots,\xi_2-x_1),
\ x_j\in\bN,\ x_i\ge x_{i+1}.
\end{equation*}
occuring with multiplicity 1.
\end{proposition}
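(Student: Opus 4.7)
The plan is Frobenius reciprocity in the compact picture, followed by a Littlewood--Richardson computation. Since $G=KP$ with $K\cap M=U(a)\times U(b)$, the restriction $\pi|_K$ is $\Ind_{U(a)\times U(b)}^{U(n)}\bigl(\det_a^{\xi_1}\boxtimes\det_b^{\xi_2}\bigr)$, and Frobenius reciprocity gives
\[
[\pi:E(\mu)] \;=\; \bigl[E(\mu)|_{U(a)\times U(b)}:\det_a^{\xi_1}\boxtimes\det_b^{\xi_2}\bigr].
\]
By the branching rule for $GL(n)\supset GL(a)\times GL(b)$, this multiplicity equals the Littlewood--Richardson coefficient $c^\mu_{(\xi_1^a),(\xi_2^b)}$, with both inducing shapes rectangular. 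If some coordinates are negative, a common shift by $\det^N$ for large $N$ reduces the question to honest partitions without changing the coefficient.

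The combinatorial heart is to show this LR coefficient is $1$ for $\mu$ of the stated form and $0$ otherwise, via direct enumeration of LR skew tableaux of shape $\mu/(\xi_1^a)$ with content $(\xi_2^b)$. The containment $(\xi_1^a)\subseteq\mu$ forces $\mu_i\ge\xi_1$ for $i\le a$, and since the content has only $b$ distinct labels, column strictness bounds column heights in the upper portion of the skew shape by $b$, forcing the block $\mu_i=\xi_1$ for $b<i\le a$. Writing $x_i=\mu_i-\xi_1$ for $i\le b$, the upper skew shape is the Young diagram $(x_1,\dots,x_b)$, and an elementary ballot argument on row-by-row reading forces its unique superstandard filling (row $i$ equals all $i$'s), using $x_i$ copies of label $i$. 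The residual content is then $\xi_2-x_i$ copies of label $i$; demanding that this fit into rows $a+1,\dots,n$ as an LR sub-tableau (under the cumulative ballot condition carried over from the upper part, together with row-weak and column-strict constraints) rigidifies the lower shape to $(\xi_2-x_b,\dots,\xi_2-x_1)$, matching the claimed last $b$ coordinates of $\mu$, and leaves exactly one valid filling.

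The main obstacle is this rigidity argument: showing that the residual content forces precisely the pairing of $x_i$'s across the middle $\xi_1$ block, and that the lower filling is unique. A slicker route is to invoke the classical fact that an LR coefficient with two rectangular inputs is always $0$ or $1$, with the nonzero $\mu$'s described explicitly in terms of how the two rectangles can be overlapped; this bypasses the explicit tableau enumeration.
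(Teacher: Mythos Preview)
Your argument is correct, but the paper proves this differently. Both approaches begin with the same Frobenius reciprocity step, reducing to the branching multiplicity
\[
[E(\mu)|_{U(a)\times U(b)}:\det\nolimits_a^{\xi_1}\boxtimes\det\nolimits_b^{\xi_2}].
\]
You then treat this as the Littlewood--Richardson coefficient $c^{\mu}_{(\xi_1^a),(\xi_2^b)}$ with two rectangular factors and either enumerate tableaux directly or invoke the classical multiplicity-one result for products of two rectangles. The paper instead reinterprets the same branching multiplicity representation-theoretically: since the finite-dimensional irreducible representations of $U(a,b)$ agree with those of $U(n)$, and $U(a,b)$ has maximal compact subgroup $U(a)\times U(b)$, the question becomes that of locating the one-dimensional $K$-type $\xi_1\otimes\xi_2$ inside finite-dimensional $U(a,b)$-modules. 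These are realized as Langlands quotients of minimal principal series with $M_0=U(1)^b\times U(a-b)$; because $\xi_1\otimes\xi_2$ is one-dimensional, its restriction to $M_0$ is transparent, pinning down the relevant principal series and giving multiplicity one immediately. Your combinatorial route is more elementary and self-contained, while the paper's argument avoids tableau calculus entirely and stays within the Langlands-parameter framework used elsewhere in the paper.
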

\begin{proof}
Change the notation so that $G=U(a,b)$ with maximal compact subgroup
$U(a)\times U(b)$ for this proof only.
The problem of computing the aforementioned multiplicities is
equivalent to computing the multiplicity of the $K-$type
$\mu=\xi_1\otimes \xi_2$ in any finite dimensional representation. A
finite dimensional representation has Langlands parameter given by a
minimal principal series of $G.$ The Levi component is
$M_0=U(1)^{b}\times U(a-b).$ This principal series has to contain
$\mu.$ But $\mu$ is 1-dimensional, and its restriction to $M_0$ is
clear. The result follows from computing the parameter of a
principal series whose Langlands subquotient is finite dimensional
and contains $\mu$. The multiplicity follows from the fact that $\mu$
occurs with multiplicity 1.
\end{proof}

Recall that we need to consider the $K$-type with highest weight
$\tau=2\lambda-\rho$ and try to realize it in the tensor product
$\pi\otimes Spin$, or equivalently in $\pi\otimes E(\rho)$, as a PRV
component.
Since the number of coordinates is $a+b$,
\begin{equation}
  \label{eq:glmax4}
\rho=\bigg(\frac{\al+\beta}{2}+\frac{k+l}{2},\dots ,
-\frac{\al+\beta}{2}-\frac{k+l}{2}\bigg).
\end{equation}
It follows that $\tau$ equals
\begin{equation}
  \label{eq:glmax5}
  \begin{aligned}
\tau=\bigg(&\frac{\beta+\al}{2}+\frac{k-l}{2}+k,\dots ,
\frac{\beta+\al}{2}+\frac{k-l}{2}+1,\\
&\frac{\beta+\al}{2}+\frac{k-l}{2},\dots,
\frac{\beta+\al}{2}+\frac{k-l}{2},\\
&\frac{\beta+\al}{2}+\frac{k-l}{2}-1,\dots,
\frac{\beta+\al}{2}+\frac{k-l}{2}-l\bigg)
  \end{aligned}
\end{equation}
On the other hand, since the $K-$types of $\pi$ have highest 
weight equal to the sum of $\xi$ and roots in $\Delta(\fk n)$, 
$\mu-\rho$ has coordinates
\begin{equation}
  \label{eq:glmax6}
  \begin{aligned}
\mu-\rho=\bigg(
&\frac{2\beta+1}{2}+\frac{k-l}{2}+x_1,\dots,
\frac{\beta+\al}{2}+\frac{k-l}{2}+l+x_b,\\
&\frac{\beta+\al}{2}+\frac{k-l}{2}+l+1+x_{b+1},\dots ,
\frac{\beta+\al}{2}+\frac{k-l}{2}+k+x_a,\\
&\frac{\beta+\al}{2}+\frac{k-l}{2}-l-y_b,\dots ,
\frac{2\al-1}{2}+\frac{k-l}{2}-y_1\bigg)
  \end{aligned}
\end{equation}
with
\begin{equation}
  \label{eq:glmax7}
\begin{aligned}
&\dots \ge x_i\ge x_{i+1}\dots \ge 0\\
&\dots \ge y_j\ge
y_{j+1}\ge \dots \ge 0.
\end{aligned}
\end{equation}
The coordinates in the middle of $\mu-\rho$ are term by term bigger
than the coordinates appear at the beginning of $\tau.$ This forces
$x_{a-k+1}=\dots =x_a=0.$ By the same argument
$y_{b}=\dots=y_{b-l+1}=0$. Note from formula \ref{eq:glmax2} that
$a\ge k$ and $b\ge l.$ The coordinates that are left over from $\tau$
are all equal, so the remaining $y_j,x_i$ are uniquely determined.

\section{Unipotent representations with Dirac cohomology}
\label{unip}
In this section we give an exposition of unipotent representations,
and compute Dirac cohomology for many examples.

\subsection{Langlands Homomorphisms}
In order to explain the parameters of unipotent representations we
recast the classification of $(\fk g,K)-$modules in terms of Langlands
homomorphisms.

First some notation:
For the field of reals the Weil group is
$$
W_\bR:=\bC^\times\cdot\{1,j\},\quad j^2=-1\in\bC^\times,\quad
jzj^{-1}=\ovl{z},
$$
where  $\Gamma:=Gal(\bC/\bR)$ is the Galois group. There is a canonical
  map $W_\bR\longrightarrow\Gamma$ that maps $\bC^\times$ to $1,$ and
  $j$ to the generator $\gamma$ of $\Gamma.$

\medskip
A linear connected reductive algebraic group $\bb G$ is given by its
 \textit {root datum} $(X^*,R,X_*,\check{R}).$
  ($\bb G\supset\bb B=\bb H\bb N$ where $\bb B$ is a Borel subgroup,
  $\bb H$ a  Cartan subgroup. $X_*$ are the rational characters
  of $\bb H,$ $X^*$ the 1-parameter subgroups, $R$ the roots and $\check
  R$ the coroots).

A real form $G(\bR)=\bb G(\bR)$ is the fixed points of an
  antiholomorphic automorphism $\sig:\bb G(\bC)\longrightarrow \bb
  G(\bC).$ Then $\sig$ induces an automorphism $a$ of the root datum,
  and therefore an automorphism $\ ^\vee a$ of the dual root datum
  $(X_*,\check R,X^*,R)$. The Langlands
 \textit{dual} is $\ ^LG:=\ ^\vee G\ltimes \Gamma$ where $\ ^\vee G$ is the
  complex group attached to the dual root datum. The nontrivial
  element of $\Gamma$ acts on $\ ^\vee G$ by an automorphism induced
  by $\ ^\vee a.$

A \textit{Langlands homomorphism} is a continuous group
  homomorphism $\Phi:W_\bR\longrightarrow \LG,$ satisfying the
  commutative diagram
\[
\begin{matrix}
W_\bR&&\overset{\Phi}{\longrightarrow} &&\
  ^L G\\
&\searrow&&\swarrow& \\
&&\Gamma& &
\end{matrix}
\]
and  such that $\Phi(\bC^\times)$ is formed of semisimple elements.
The main result of the Langlands classification is that $\ ^\vee G$
conjugacy classes of Langlands homomorphisms
  parametrize equivalence classes of irreducible $(\fk g,K)$ modules
(more precisely, characters of $\Phi(W_\bR)/\Phi(W_\bR)_0$).
In the case of a complex group viewed as a real group, this
specializes to the following.

\begin{example} $G(\bR)$ is a complex group $G_0$
viewed as a real group. Then
\[
\vG=\vG_0\times\vG_0, \qquad ^\vee\! a(x,y)=(y,x).
\]
\[
\Phi\longleftrightarrow (\la_L,\la_R)\in\fk h^*\times\fk h^*,\qquad
\la_L-\la_R\in X^*
\]

The irreducible module $L(\la_L,\la_R)$ is obtained as follows. Let $B=HN$ be a
Borel subgroup with $H=T\cdot A$ a Cartan subgroup such that
$T=K\cap H$, and $A$ is split.  Then
$\mu:=\la_L-\la_R$ determines a character of $T,$ $\nu:=\la_L+\la_R$
a character of $A.$  The standard module and irreducible module
attached to $\Phi$ are as before,
\begin{equation*}
\begin{aligned}
&X(\la_L,\la_R):=\Ind_{B}^{G}[\bC_\mu\otimes\bC_\nu\otimes\one]_{K-finite},\\
&L(\la_L,\la_R)\text{ unique irreducible quotient containing } V_\mu.
\end{aligned}
\end{equation*}
\end{example}

\subsection{Unipotent Representations }

An \textit{Arthur parameter} is a homomorphism
$$
\Psi:W_\bR\times SL(2)\longrightarrow \ ^L G
$$
such that $\Psi(W_\bR)$ is bounded. The Langlands homomorphism attached
to $\Psi$ is $\Phi_\Psi(z):=\Psi\bigg(z,
\begin{bmatrix}
  z^{1/2}&0\\0&z^{-1/2}
\end{bmatrix}\bigg)
$

\medskip
A \textit{special unipotent parameter}\ is an Arthur parameter satisfying
$\Psi\mid_{\bC^\times}=Triv.$ Then $\{\Psi\}/\vG $ corresponds
to $\vG$ conjugacy classes $\{\vth,\ve,\vh,\vf\}$
satisfying $\Ad\vth\ve =-\ve,\ \Ad\vth\vh=\vh,\ \Ad\vth\vf=-\vf,$ and
$\Ad\vth^2=Id.$ If we decompose $\vg=\vk+\vs$ according to
the eigenvalues of $\vth,$ then the $\Psi$ are in 1-1 correspondence
with $\vK-$ orbits of nilpotent elements in $\vs.$

\medskip
Fix an infinitesimal character $\chi_{\vO}=\vh/2.$
The \textit{unipotent packet}\ attached to the $\vG-$nilpotent orbit
$\vO$ corresponding to $\Psi$ is  the set of irreducible
representations with annihilator in $U(\fk g)$ maximal containing the
ideal in $\C Z:=U(\fk g)^G$ corresponding to $\chi_{\vO}.$ It is
described in \cite{BV}.

For \textit{general unipotent representations,}\ the same
definitions apply, but each $\vO$ has a finite number of infinitesimal
characters attached to it. For complex classical groups they are
listed in \cite{B}.

\begin{example}
The metaplectic representation of $Sp(2n,\bC)$ is
unipotent. The orbit $\vO\subset so(2n+1,\bC)$ corresponds to the
partition $(2n-1,1,1).$ Then in standard coordinates
\[
\chi_{\vO}=(n-1,\dots ,2,1,1,0)
\]
but the infinitesimal character we want is
\[
(n-1/2,\dots ,1/2).
\]
This is a special case of the procedure to attach finitely many
infinitesimal characters to $\vO.$
\end{example}

\begin{remark}\label{rmk:Ktypes}
In the following we will describe explicitly the unipotent representations with nonzero
Dirac cohomology for each of the types A,B,C,D, and E. After identifying the representations,
we will need a description of their $K-$types. For type A, this follows from Proposition
\ref{prop:glKtypes}. In other cases, the information can be obtained from realizing the unipotent
representations in question via dual pair correspondences. We skip these arguments and simply
state the result in each case.
\end{remark}
\subsection{Type A}\label{sec:A}

Let $G=GL(n,\mathbb{C})$. As we have seen, unipotent $\pi$ correspond to partitions of $n$.
The partition into just one part corresponds to the trivial representation, so we know the Dirac
cohomology is equal to the spin module. In the rest of this section we skip this obvious case.

Since $\lambda$ must be regular, we see from the way $\lambda$ is constructed from a partition that we
should only consider partitions of $n$ into two parts $a,b$ of opposite parity. In particular, $n$ must be odd.
So we take
\begin{equation}
  \label{eq:A.1}
2\la=(a-1,a-3,\dots ,b,b-1,\dots,-b+1,-b,\dots ,-a+3,-a+1),
\end{equation}
where we assume $a>b$.
The corresponding unipotent representation is spherical,
\begin{equation}
  \label{eq:A4}
  \pi=\Ind_{GL(a)\times GL(b)}^{GL(a+b)}[triv\otimes triv].
\end{equation}
Its $K-$structure is formed of
\begin{equation}
  \label{eq:A5}
 \mu= (\al_1,\dots ,\al_b,0,\dots ,0,-\al_b,\dots ,-\al_1),\quad \al_j\in\bN
\end{equation}
occuring with multiplicity 1.
The WF-set has a nilpotent with two columns of length $a$ and $b$.

We see that this case is covered by the results of Subsection \ref{maxpar}.
The Dirac cohomology consists of a single $K$-type with highest weight
\begin{equation}
  \label{eq:A.3}
\tau=(\frac{a-b-1}{2},\dots 1,\unb{2b+1}{0,\dots, 0},-1,\dots ,-\frac{a-b-1}{2})
\end{equation}
and multiplicity $[Spin:E(\rho)]$. The $K$-type $E(\mu)$ of $\pi$ such that $E(\tau)$ appears
in $E(\mu)\otimes E(\rho)$ is given by
\begin{equation}
  \label{eq:A5b}
 \mu= (\frac{a+b-1}{2},\dots ,\frac{a-b-1}{2},0,\dots ,0,-\frac{a-b-1}{2},\dots ,-\frac{a+b-1}{2}).
\end{equation}

\subsection{Type B}\label{sec:B}
Let $G=SO(2n+1,\bC)$. We use the standard coordinates.

To ensure that $2\lambda$ is regular integral, there is
only one possible Arthur parameter, namely the case of $\vO$ equal
to the principal nilpotent orbit. The results in \cite{B}
give more unipotent representations, all spherical. They are associated to the
orbits $\vO$ which have partitions formed of exactly two elements. The WF-set
of a nontrivial unipotent representation $\pi$ must be a nilpotent
with two columns of opposite parity, $2b+1$ and $2a.$ Then $2\la$ is
$W$-conjugate to
\begin{equation}
  \label{eq:B.1}
  (2a,2a-3,\dots ,2;2b-1,2b-3,\dots,1),\qquad a,b\in\bN.
\end{equation}
Only the cases $2b+1>2a$, \ie  $b\geq a$, are unitary. For $b>a$ we get
\begin{equation}
  \label{eq:B.1a}
 2\la= (2b-1,2b-3,\dots,2a+3,2a+1,2a,2a-1,\dots,2,1).
\end{equation}
Since
\begin{equation}
  \label{eq:B.2}
  \rho=(a+b-1/2,a+b-3/2,\dots ,1/2),
\end{equation}
we see that
\begin{equation}
  \label{eq:B3}
  \tau=2\la-\rho=(b-a-1/2,b-a-3/2,\dots,3/2,1/2,1/2,\dots ,1/2),
\end{equation}
with the last $2a+1$ coordinates equal to $1/2$.

The $K-$structure of $\pi$ is
\begin{equation}
  \label{eq:B4}
  (\al_1,\al_1,\al_2,\al_2,\dots ,\al_a,\al_a,\unb{b-a}{0,\dots ,0}),\qquad \al_j\in\bN
\end{equation}
occuring with multiplicity 1.

Now we have to identify $K$-types $E(\mu)$ of $\pi$ such that
$\mu-\rho$ is conjugate to $\tau$ under $W$. We calculate
\begin{multline}
  \label{eq:B4b}
  \mu-\rho=(\al_1-a-b+1/2,\al_1-a-b+3/2,\dots ,\cr
  \al_a+a-b-3/2,\al_a+a-b-1/2,\cr
  \unb{b-a}{a-b+1/2,a-b+3/2,\dots ,-3/2,-1/2}).
\end{multline}
To be conjugate to $\tau$, this expression must have $2a+1$ components equal to $\pm 1/2$. Since there is only one such component among
the last $b-a$ components, the first $2a$ components must all be equal to $\pm 1/2$. Since the first component is smaller than
the second by one, the third component is smaller than the fourth by one, etc., we see that the first, third etc. components must
be $-1/2$ while the second, fourth, etc. components must be $1/2$. This completely determines $\mu$:
\begin{equation}
  \label{eq:B4c}
  \al_1=a+b-1,\,\al_2=a+b-3,\dots ,\,\al_a=b-a+1.
\end{equation}
It is now clear that for this $\mu$ we indeed get a contribution to $H_D(\pi)$, and moreover we can see exactly which $w$
conjugates $\mu-\rho$ to $\tau$.

It remains to consider the case $b=a$. The calculation and the final result are completely analogous. We get
\begin{equation}
  \label{eq:B4d}
  \tau=(1/2,1/2,\dots ,1/2),
\end{equation}
corresponding to
\begin{equation}
  \label{eq:B4e}
  \mu=(2a-1,2a-1,2a-3,2a-3,\dots,1,1).
  \end{equation}

\subsection{Type C}\label{sec:C}
Let $G=Sp(2n,\bC)$. We use the usual coordinates.

As in the other cases, the only Arthur parameters with $2\la$ regular
integral correspond to the principal nilpotent. In this case $\la$
itself is integral. The only other case when $\la$ can be regular
corresponds to the subregular $\vO,$ corresponding to the partition
$1,1,2n-1.$ In this case, the unipotent representations are
the two metaplectic representations, $\pi_{even}$ and $\pi_{odd}$.
The corresponding $\la$ is given by
\begin{equation}
  \label{eq:C.1}
  2\la=(2n-1,2n-3,\dots ,3,1).
\end{equation}
The other cases analogous to type B are \textbf{not} unitary. The
$K-$structures of $\pi_{even}$ and $\pi_{odd}$ are
\begin{equation}\label{C.2}
\begin{aligned}
&(2\al,0,\dots ,0),\\
&(2\al+1,0,\dots 0),\qquad \al\in\bN.
\end{aligned}
\end{equation}
Here $\al=0$ is allowed.
The WF-set is the nilpotent with columns $2n-1,1.$

Since in this case
\begin{equation}
  \label{eq:C.3}
  \rho=(n,n-1,\dots,2,1),
\end{equation}
we see that
\begin{equation}
  \label{eq:C.4}
  \tau=2\la-\rho=(n-1,n-2,\dots,1,0).
\end{equation}
For each of the two metaplectic representations the $K$-types are given by
$\mu=(k,0,0,\dots,0)$, and therefore
\begin{equation}
  \label{eq:C.5}
  \mu-\rho=(k-n,-(n-1),-(n-2),\dots,-2,-1).
\end{equation}
This should be equal to $\tau$ up to $W$, and this happens precisely when $k=n$.
(Recall that $W$ consists of permutations and arbitrary sign changes.)

So we see that for even $n$, $H_D(\pi_{even})$ consists of $E(\tau)$,
for $\tau$ as in (\ref{eq:C.4}), without multiplicity other than the global multiplicity
$[Spin:E(\rho)]$, while $H_D(\pi_{odd})=0$.

For odd $n$, the situation is reversed: $H_D(\pi_{even})=0$, while $H_D(\pi_{odd})$ consists
of $E(\tau)$, with multiplicity $[Spin:E(\rho)]$.

\subsection{Type D}\label{sec:D}
Let $G=SO(2n,\bC)$. We use the usual coordinates.

Since $2\la$ must be regular integral, in this case the
WF-sets of the nontrivial unipotent representations can only be
nilpotents with columns $2b,2a-1,1,$ where $a+b=n$.

By \cite{B}, there are two unipotent (so also unitary)
representations with  $2\la$ $W$-conjugate to $(2a-1,2a-3,\dots
,1;2b-2,\dots ,0)$; the spherical one, and the one with lowest
$K-$type $(1,0,\dots ,0)$ and parameter
\[
(a-1/2,\dots,3/2,-1/2,b-1,\dots ,1,0)\times (a-1/2,\dots ,
3/2,1/2,a-1,\dots ,1,0).
\]
Made dominant for the standard positive system,
\begin{equation}
  \label{eq:D.1}
2\la=(2b-2,2b,\dots,2a+2,2a,2a-1,2a-2,\dots ,1,0).
\end{equation}
(When $b=a,$ the parameter is $(2a-1,2a-2,\dots ,1,0).$) Since 
\begin{equation}
  \label{eq:D.1a}
  \rho= (a+b-1,a+b-2,\dots ,1,0),
\end{equation}
we see that
\begin{equation}
  \label{eq:D.2}
\tau=2\la-\rho=(b-a-1,\dots ,1,0,\unb{2a}{0,\dots ,0})
\end{equation}
The $K-$structure of our unipotent representations is given by
\begin{equation}
  \label{eq:D.3}
  \begin{aligned}
& \mu= (\al_1,\dots ,\al_{2a},0,\dots ,0),\qquad \al_j\in\bN,\
\sum\al_j\in2\bN,\\
&\mu= (\al_1,\dots ,\al_{2a},0,\dots ,0),\qquad \al_j\in\bN,\
\sum\al_j\in2\bN+1.
\end{aligned}
\end{equation}
The first case is for the spherical representation, the second for the
other one. Therefore,
\begin{equation}
  \label{eq:D.4}
 \mu-\rho= (\al_1-(a+b-1),\dots ,\al_{2a}-(b-a),-(b-a-1),\dots ,-1,0).
\end{equation}
Since $\tau$ has $2a+1$ zeros, the only way $\mu-\rho$ can be
conjugate to $\tau$ is to have
\begin{equation}
  \label{eq:D.5}
\al_1=a+b-1,\,\al_2=a+b-2,\dots ,\al_{2a}=b-a.
\end{equation}
Using (\ref{eq:D.3}),
we conclude that for even $a$, the spherical unipotent representation
has $H_D$ equal to $E(\tau)$, with multiplicity $[Spin:E(\rho)]$, while the nonspherical representation
has $H_D=0$. For odd $a$ the situation is reversed: the spherical representation has $H_D=0$, while the
nonspherical one has $H_D$ equal to $E(\tau)$, with multiplicity
$[Spin:E(\rho)]$. 
(Recall that for type D the Weyl group 
consists of permutations combined with an even number
of sign changes, but we can use all sign changes because of the presence of 0.)

\subsection{Type E6}\label{sec:E6}
We use the Bourbaki realization. There are two integral systems,
$A_5A_1$ which gives the nilpotent $3A_1,$ and $D_5T_1$ which gives
$2A_1.$ The parameters are
\begin{equation}
  \label{eq:E6.1}
  \begin{aligned}
&\la=(-5/2,-3/2,-1/2,1/2,3/4,-3/4,-3/4,3/4)\longleftrightarrow 3A_1\\
&\la=(-9/4,-5/4,-1/4,3/4,7/4,-7/4,-7/4,7/4)\longleftrightarrow 2A_1.
  \end{aligned}
\end{equation}
The representations are factors in $\Ind_{A_5}^{E_6}[\bC_\nu].$ The
parameter is
\begin{equation}
  \label{eq:E6.2}
  \begin{aligned}
&(-11/4,-7/4,-3/4,1/4,5/4,-5/4,-5/4,5/4)+\\
+\nu&(1/2,1/2,1/2,1/2,1/2,-1/2,-1/2,1/2).
  \end{aligned}
\end{equation}
The two points above are $\nu=1/2$ and $\nu=1.$ The representations are
unitary because the induced module has multiplicity 1 $K-$structure.
\subsection{Type E7}\label{sec:E7}
We use the Bourbaki realization. There are three integral systems,
$D_6A_1$ which gives the nilpotent $(3A_1)',$  $E_6T_1$ which gives
$2A_1,$ and $A_7$ which gives $4A_1$.  The parameters for the first
two are
\begin{equation}
  \label{eq:E7.1}
  \begin{aligned}
&\la=(0,1,2,3,4,5,-1,1)\longleftrightarrow (3A_1)'\\
&\la=(0,1,2,3,4,-7/2,-17/4,17/4)\longleftrightarrow 2A_1.
  \end{aligned}
\end{equation}
The first representation is a factor in $\Ind_{D_6}^{E_6}[\bC_\nu].$ The
parameter is
\begin{equation}
  \label{eq:E7.2}
  (0,1,2,3,4,5,0,0)+\nu(0,0,0,0,0,0,-1,1).
\end{equation}
The point above is $\nu=1$, an end point of a complementary series. In
any case the representation is multiplicity free, so the
representation is unitary.  The second representation is a factor in
$\Ind_{E_6}^{E_7}[\bC_\nu].$ The parameter is
\begin{equation}
  \label{eq:E7.3}
(0,1,2,3,4-4,-4,4)+\nu(0,0,0,0,0,1,-1/2,1/2)
\end{equation}
with $\nu=1/2.$ The representation is unitary because it is at an end point
complementary series; also the induced module is multiplicity free.

\medskip
The third representation has parameter
\begin{equation}
  \label{eq:E7.4}
  (-9/4,-5/4,-1/4,3/4,7/4,11/4,-4,4).
\end{equation}
This is the minimal length parameter which gives the integral system $A_7.$
By the work of Adams-Huang-Vogan \cite{AHV}, the $K-$structure is multiplicity free
and a full lattice in $E_7$. It does not occur in a multiplicity free
induced module, and is not an end point of a complementary series.

 \subsection{Type E8}\label{sec:E8}
We use the Bourbaki realization. There are two integral systems,
$D_8$ which gives the nilpotent $4A_1,$ and  $E_7A_1$ which gives
$3A_1.$ The parameters are
\begin{equation}
  \label{eq:E8.1}
  \begin{aligned}
&\la=(0,1,2,3,4,5,6,8)\longleftrightarrow 4A_1\\
&\la=(0,1,2,3,4,5,-8,9)\longleftrightarrow 3A_1.
  \end{aligned}
\end{equation}
These are the minimal length parameters which gives the integral
systems $D_8$ and $E_7A_1.$
By the work of Adams-Huang-Vogan \cite{AHV}, the $K-$structure of the first one is multiplicity free
and a full lattice in $E_8$. It does not occur in a multiplicity free
induced module, and is not an end point of a complementary series. The
second one is also multiplicity free, and occurs at an endpoint of a
complementary series. Possibly it is also unitary by an old argument 
of Barbasch-Vogan.


\begin{thebibliography}{9999}

\bibitem [AHV]{AHV} J.~Adams, J.-S.~Huang, D.A.~Vogan, Jr.,
\emph{Functions on the model orbit in $E_8$},
Represent. Theory \textbf{2} (1998), 224--263.

\bibitem [B]{B} D.~Barbasch,
\emph{The unitary dual for complex classical Lie groups}, Invent. Math. \textbf{96} (1989), no. 1, 103--176.

\bibitem [BV]{BV} D.~Barbasch, D.~Vogan,
\emph{Unipotent representations of complex semisimple groups},
Ann. of Math. \textbf{121} (1985), 41--110.

\bibitem [BW]{BW} A.~Borel, N.R.~Wallach,
\emph{Continuous cohomology, discrete subgroups, and representations of reductive groups}, second edition,
Mathematical Surveys and Monographs 67, American Mathematical Society, Providence, RI, 2000.

\bibitem [E]{E} T.~Enright, \emph{Relative Lie algebra cohomology and unitary representations of complex Lie groups},
Duke Math. J. \textbf{46} (1979), no. 3, 513--525.

\bibitem [HKP]{HKP} J.-S.~Huang, Y.-F.~Kang, P.~Pand\v zi\'c, \emph{Dirac
cohomology of some Harish-Chandra modules}, Transform. Groups \textbf{14} (2009), no. 1, 163--173.

\bibitem [HP1]{HP1} J.-S.~Huang, P.~Pand\v zi\'c, \emph{Dirac
cohomology, unitary representations and a proof of a conjecture of
Vogan}, J. Amer. Math. Soc.  \textbf{15} (2002), 185--202.

\bibitem [HP2]{HP2} J.-S.~Huang, P.~Pand\v zi\'c, \emph{Dirac
Operators in Representation Theory}, Mathematics: Theory and
Applications, Birkhauser, 2006.

\bibitem [Kn]{Kn} A.W.~Knapp, \emph{Representation Theory of Semisimple Groups: An Overview Based on Examples},
Princeton University Press, Princeton, 1986. Reprinted: 2001.

\bibitem [K1]{K1} B.~Kostant,
\emph{A formula for the multiplicity of a weight}, Trans. Amer. Math. Soc. \textbf{93} (1959), 53--73.

\bibitem [K2]{K2} B.~Kostant,
\emph{Lie algebra cohomology and the generalized Borel-Weil
theorem}, Ann. of Math. \textbf{74} (1961), 329--387.

\bibitem [LS]{LS} J.-S.~Li, J.~Schwermer, \emph{Automorphic representations and cohomology of arithmetic groups},
Challenges for the 21st century (Singapore, 2000), 102--137, World Sci. Publ., River Edge, NJ, 2001.

\bibitem [P1]{P1} R.~Parthasarathy, \emph{Dirac operator and the discrete
series}, Ann. of Math. \textbf{96} (1972), 1--30.

\bibitem [P2]{P2} R.~Parthasarathy, \emph{Criteria for the unitarizability of some highest weight modules},
Proc. Indian Acad. Sci. \textbf{89} (1980), 1--24.

\bibitem [PRV]{PRV} K.R.~Parthasarathy, R.~Ranga Rao, V.S.~Varadarajan, \emph{Representations
of complex semisimple Lie groups and Lie algebras}, Ann. of Math. \textbf{85} (1967), 383--429.

\bibitem [SR]{SR}  S.A.~Salamanca-Riba,
\emph{On the unitary dual of real reductive Lie groups and the $A_{\fk q}(\lambda)$ modules: the strongly regular
case}, Duke Math. J. \textbf{96} (1998), 521--546.

\bibitem [V1]{V1} D.A.~Vogan, Jr., \emph{The unitary dual of ${\rm GL}(n)$ over an Archimedean field},
Invent. Math. \textbf{83} (1986), no. 3, 449--505.

\bibitem [V2]{V} D.A.~Vogan, Jr., \emph{Dirac operators and unitary
representations}, 3 talks at MIT Lie groups seminar, Fall 1997.

\bibitem [VZ]{VZ} D.A.~Vogan, Jr., and G.J.~Zuckerman,
\emph{Unitary representations with non-zero cohomology}, Comp. Math. \textbf{53} (1984), 51--90.

\bibitem [W]{Warner} G.~Warner,
\emph{Harmonic analysis on semisimple Lie groups I}, Springer-Verlag, Berlin, Heidelberg, New York, 1972.

\bibitem [Zh]{Zh} D.P.~Zhelobenko,
\emph{Harmonic analysis on complex semisimple Lie groups}, Mir, Moscow, 1974.

\end{thebibliography}

\begin{thebibliography}{A}

\bibitem [A]{A} T. Aoki, \textit{Calcul exponentiel des op\'erateurs
microdifferentiels d'ordre infini.} I, Ann. Inst. Fourier (Grenoble)
\textbf{33} (1983), 227--250.

\bibitem [B]{B} R. Brown, \textit{On a conjecture of Dirichlet},
Amer. Math. Soc., Providence, RI, 1993.

\bibitem [D]{D} R. A. DeVore, \textit{Approximation of functions},
Proc. Sympos. Appl. Math., vol. 36,
Amer. Math. Soc., Providence, RI, 1986, pp. 34--56.

\end{thebibliography}
\end{document}